\pgfplotsset{compat=1.18}
\definecolor{boxblue}{HTML}{0072B2}
\definecolor{boxorange}{HTML}{E69F00}
\definecolor{boxgreen}{HTML}{009E73}
\definecolor{boxred}{HTML}{D55E00}
\definecolor{boxpurple}{HTML}{CC79A7}
\theoremstyle{plain}
\newtheorem{lemma}{Lemma}
\newtheorem{prop}{Proposition}
\newtheorem{example}{Example}
\newtheorem*{example*}{Example}
\theoremstyle{remark}
\newtheorem{remark}{Remark}
\setlist[enumerate,1]{label=\arabic*.}
\setlist[enumerate,2]{label*=\arabic*.}
\setlist[enumerate,3]{label*=\arabic*.}
\date{\today}
\def\tsc#1{\csdef{#1}{\textsc{\lowercase{#1}}\xspace}}
\pgfplotsset{compat=1.18}
\definecolor{nofair}{HTML}{D55E00} 
\definecolor{fair}{HTML}{0072B2}   
\definecolor{targetgray}{HTML}{777777}
\theoremstyle{thmstyleone}%
\theoremstyle{thmstylethree}%
\begin{document}
\newgeometry{
  left=3cm,
  right=3cm,
  top=4cm,
  bottom=4cm
}

\title[mode=title]{An optimization-based approach to ranking aggregation with weak order outputs}






\author[1]{\fnm{Juan A.} \sur{Aledo}}\email{JuanAngel.Aledo@uclm.es}

\author[2]{\fnm{Concepci\'on} \sur{Dom\'inguez}}\email{concepcion.dominguez@um.es}

\author*[3]{\fnm{Juan~de Dios} \sur{Jaime-Alc\'antara}}\email{jjaime@umh.es}

\author[3]{\fnm{Mercedes} \sur{Landete}}\email{landete@umh.es}

\affil[1]{\orgdiv{Departamento de Matemáticas}, \orgname{Universidad de Castilla-La Mancha}, \city{Albacete}, \postcode{02071}, \country{Spain}}

\affil[2]{\orgdiv{Departamento de Estadística e Investigación Operativa}, \orgname{Universidad de Murcia}, \city{Murcia}, \postcode{30100}, \country{Spain}}

\affil[3]{\orgdiv{Departamento de Estadística, Matemáticas e Informática, Instituto Centro de Investigación Operativa}, \orgname{Universidad Miguel Hernández de Elche}, \city{Alicante}, \postcode{03202}, \country{Spain}}


\abstract{
Rank aggregation problems combine conflicting rankings of items into a single consensus ranking. In many applications, forcing all items into a strict order is too restrictive, since some items may be tied and placed in the same ordered group. This paper presents an optimization framework for rank aggregation problems in which the final ranking is a weak order, or bucket order. The framework uses binary variables to indicate whether one item is ranked before another or tied with it, and allows additional requirements to be added through linear constraints. We consider settings with an exact number of buckets, given bucket sizes, a ranking of the top items with the remaining items grouped in a final bucket, and fairness requirements for predefined groups in the upper part of the ranking. As a case study, we apply the framework to the Optimal Bucket Order Problem (OBOP), which we formulate for the first time as a mixed-integer linear programming problem. Experiments on benchmark instances derived from PrefLib and MovieLens evaluate the proposed formulation and its constrained versions. They also show that the new OBOP formulation allows us to confirm the optimality of most best-known heuristic solutions and improves some of them.
}

\keywords{Combinatorial optimization,
Rank aggregation,
Weak orders,
Optimal Bucket Order Problem, Fair ranking}



\maketitle

\section{Introduction}

Rank aggregation problems consist of combining several, possibly conflicting, rankings of the same set of items into a single consensus ordering that represents collective preferences. They arise in a wide range of applications, including social choice and information retrieval \cite{dwork2001rank,farah2007outranking}, as well as bioinformatics and recommendation systems \cite{kolde2012robust,balchanowski2023comparative}. In these contexts, heterogeneous sources provide ordinal information that must be transformed into a reliable and interpretable collective output. Designing such an output is far from trivial, since a consensus ranking should not only aggregate preferences accurately, but may also need to satisfy robustness and computational requirements \cite{dong2019alleviating,kuhlman2020rank}. Moreover, fairness considerations have recently become relevant when the resulting ranking affects the representation or visibility of different groups \cite{balestra2024fairmc}.

A broad methodological literature has been developed around rank aggregation. Different problems arise depending on the dissimilarity measure used to compare rankings and on the type of rankings allowed as inputs and outputs. Classical distances include Kendall tau, Spearman footrule, and Kemeny--Snell distances \cite{critchlow1985metric,kumar2010generalized}. Likewise, rank aggregation problems may deal with incomplete or weighted input rankings and may impose additional requirements on the consensus output, such as selecting only the most relevant items \cite{lin2010rank,chen2015spectral}. Fairness requirements have also recently been considered in order to regulate the representation of predefined groups in the resulting ranking \cite{balestra2024fairmc}. These problem choices are especially relevant when the resulting ranking is used to support decision-making processes and must therefore be both meaningful and explainable.

One of the most prominent problems in this area is the Kemeny Rank Aggregation Problem, which seeks a ranking minimizing the sum of Kendall distances to the input rankings \citep{kemeny1962mathematical}. Although this problem is NP-hard, several  integer linear programming formulations have been proposed \citep{ali2012experiments}. As well-known, integer linear programming is an optimization approach in which the decision variables are required to take integer values and both the objective function and the constraints are linear. This modelling paradigm has also been used in variants that allow ties in the consensus ranking \citep{brancotte2015rank, yoo2021new}. Exact optimization approaches are valuable not only because they provide certified optimal solutions, but also because they offer rigorous benchmarks for assessing the quality of heuristic and metaheuristic methods.

In this paper, we focus on rank aggregation problems whose output is a \emph{weak order}, also known as a \emph{bucket order}. A weak order is a complete ranking in which ties are allowed. Equivalently, the items are partitioned into ordered groups, or buckets, so that items in the same bucket are considered tied, while all items in an earlier bucket are preferred to all items in a later bucket. This type of output is natural in many decision-making contexts: items may be indistinguishable from the point of view of the available information, experts may only be able to classify alternatives into ordered categories, or the decision maker may prefer a grouped ranking rather than an artificial strict ordering.

While optimization models for linear-order aggregation are well established (see \cite{alcaraz2022rank}), the corresponding literature on weak-order aggregation remains comparatively limited. Existing studies on linear ordering have explored alternative objective functions to reduce the multiplicity of optimal solutions \citep{benito2025multiple}, bilevel formulations to obtain consistent orderings across sets of elements \citep{labbe2023bilevel}, and structural properties of ranking models \citep{ceberio2015linear, martin2025improving}. However, these advances have not yet been extended into a unified optimization framework for weak-order aggregation. To the best of our knowledge, no existing approach simultaneously accommodates exact optimization, structural restrictions on the bucket structure, upper-part ranking requirements, and group-based fairness constraints. This paper addresses this gap by developing a mixed-integer linear programming framework for rank aggregation with weak-order outputs. 


We study settings in which the structure of the consensus bucket order is partially prescribed. Specifically, we address cases where the ranking must contain a fixed number of nonempty buckets, as well as variants in which bucket sizes are predetermined, either uniformly or according to a given sequence. We also consider situations where the decision maker is primarily interested in the upper part of the ranking. In such cases, the top $k$ items are selected and ordered as a weak order, whereas the remaining $n-k$ items are grouped into a single final bucket. This approach preserves meaningful distinctions among the highest-ranked items while treating lower-ranked alternatives as interchangeable, thereby avoiding the ambiguity associated with first constructing a complete bucket order and subsequently truncating it.

In addition, we incorporate group-based fairness requirements. In many applications, items belong to predefined groups, and the consensus ranking should regulate the representation of these groups among the most visible positions. We model this by imposing lower and upper proportional bounds over cumulative prefixes of buckets. In this way, fairness is enforced over the ordered structure of the ranking, rather than independently within each bucket, while remaining compatible with the presence of ties.

A central application of the framework is the Optimal Bucket Order Problem (OBOP). In this problem, the input is a pairwise order matrix that describes the degree to which each item is preferred to each other item, and the aim is to find a bucket order whose associated bucket matrix is as close as possible to that input matrix. The OBOP is NP-hard, as established in \cite{gionis2006algorithms}. 
Early algorithmic approaches were based on randomized and greedy procedures 
\citep{ukkonen2009randomized,kenkre2011discovering}. Subsequent works proposed 
improved greedy strategies \citep{aledo2017utopia}, evolution-based methods 
\citep{aledo2018approaching}, and scalable algorithms for large instances 
\citep{aledo2021highly}. Other metaheuristic approaches have also been explored 
in \cite{lorena2021biased}. We provide, to the best of our knowledge, the first 
 linear programming formulation for this problem. This makes 
it possible to find optimal solutions and, as an additional result, to assess 
whether previously reported heuristic solutions are optimal or can be improved.

The proposed formulations are evaluated through computational experiments on benchmark instances derived from PrefLib and MovieLens. The experiments analyze the behaviour of the constrained variants, showing how requirements on the number or size of buckets, upper-ranking considerations, and fairness constraints affect both the optimal bucket order and the computational performance of the models. In addition, the computational results certify the optimality of most of the best-known heuristic solutions for the OBOP, improve the current best-known solutions for several instances, and provide exact benchmarks for future algorithmic developments.

Overall, the contribution of this paper is twofold. From a modelling perspective, it provides a unified exact optimization framework for rank aggregation with weak-order outputs and shows how several practically relevant requirements can be represented through linear constraints. From a computational perspective, it introduces the first exact approach for the OBOP and uses it to certify optimality, improve previous heuristic results, and study the effect of structural and fairness constraints on benchmark instances.

The remainder of the paper is organized as follows. Section~\ref{sec:IPF} introduces the integer programming framework for weak-order aggregation. Section~\ref{Sec:OBOP} defines the Optimal Bucket Order Problem and presents its exact formulation. Sections~\ref{sec:numero_fijo}, \ref{Sec:TCU}, and \ref{sec:fairness} develop, respectively, the models with a fixed number of buckets, the formulation for selecting an upper part of the ranking while collapsing the remaining items into a final bucket, and the fairness-aware weak-order aggregation model. Section~\ref{sec:CE} reports the computational experiments. Section~\ref{sec:CFR} summarizes the main conclusions and outlines future research directions. Additional technical details are provided in the appendix.

\section{Integer programming framework} \label{sec:IPF}

We introduce the integer programming framework used throughout the paper to model aggregation problems whose feasible solutions are weak orders, following \cite{fiorini2004weak}. A \emph{weak order} (or \emph{bucket order}) is a complete preorder: every pair of items is comparable, ties are allowed, and the relation is transitive. Hence, items may be grouped into ordered equivalence classes, also called buckets.

Let \( [n]=\{1,2,\dots,n\} \) be the set of items. We encode a weak order through the binary variables
\[
x_{rs}=
\begin{cases}
1 & \text{if } r \text{ comes before or is tied with } s,\\
0 & \text{otherwise},
\end{cases}
\qquad \forall r,s\in [n]:\ r\neq s.
\]
Thus, \(x_{rs}=x_{sr}=1\) represents a tie between \(r\) and \(s\), whereas \(x_{rs}=1\) and \(x_{sr}=0\) means that item \(r\) is ranked ahead of item \(s\).

The weak-order structure is enforced by the following constraints \citep{brancotte2015rank,yoo2021new}:
\begin{subequations} \label{mod:empates}
    \begin{align} 
    \min\quad & f(\Pi,\bm{x}) \\
        \text{s.t.}\quad 
        & x_{rs}+x_{sr}\ge 1 
        && \forall r,s\in [n]:\ r<s, \label{eq:antes_o_despues_1} \\
        & x_{rs}+x_{st} \le 1+x_{rt}
        && \forall r,s,t\in [n]:\ r \neq s \neq t \neq r, \label{eq:transitividad_1} \\
        & x_{rs}\in\{0,1\}
        && \forall r,s\in [n]:\ r\neq s. \label{eq:naturaleza_x_1}
    \end{align}
\end{subequations}

The objective function \(f(\Pi,\bm{x})\) depends on the particular aggregation problem \(\Pi\). Constraint~\eqref{eq:antes_o_despues_1} ensures comparability, while constraint~\eqref{eq:transitividad_1} enforces transitivity. Therefore, every feasible solution of model~\eqref{mod:empates} represents a weak order.

Although this paper focuses on weak orders, linear orders can be obtained by replacing the inequality in constraint~\eqref{eq:antes_o_despues_1} with an equality.

Finally, the framework also allows us to detect alternative optimal solutions. Given a previously found solution \(\hat{\bm{x}}\), it can be excluded from the feasible region by adding
\[
1 \le 
\sum_{r=1}^{n}
\sum_{\substack{s=1\\s\neq r}}^{n}
\left((1-\hat{x}_{rs})x_{rs}
+\hat{x}_{rs}(1-x_{rs})\right).
\]
This constraint forces at least one binary variable to differ from \(\hat{\bm{x}}\). Thus, by iteratively resolving the model and excluding each newly identified optimal solution, we can determine whether alternative optimal solutions exist and enumerate them whenever they do. Throughout the paper, whenever we report that an optimum is unique or that all alternative optimal solutions have been identified, this conclusion is obtained using this iterative exclusion procedure. To avoid repetition, we do not restate this verification in each example.

\section{The Optimal Bucket Order Problem}\label{Sec:OBOP}

In this section, we show how the general framework introduced in Section~\ref{sec:IPF} can be particularized to the \emph{Optimal Bucket Order Problem} (OBOP). This leads, to the best of our knowledge, to the first exact mixed-integer linear programming formulation proposed for the OBOP.

A bucket order \(\mathcal{B}\) on \(n\) items is an ordered partition \(B_1,B_2,\dots,B_p\) of \( [n]=\{1,2,\dots,n\} \), with \(1\le p\le n\). We represent \(\mathcal{B}\) by listing the elements with bars between consecutive buckets. For instance,
\[
4 \mid 1\ 3 \mid 2\ 5
\]
denotes a bucket order on \( [5] \) with three buckets: item \(4\) is ranked ahead of all the others, items \(1\) and \(3\) are tied, and items \(2\) and \(5\) are tied in the last bucket.

Each bucket order \(\mathcal{B}\) can be represented by a \emph{bucket matrix} \(B=(b_{rs})_{n\times n}\), where
\[
b_{rs}=
\begin{cases}
1 & \text{if } r \text{ is preferred to } s,\\
0.5 & \text{if } r \text{ and } s \text{ are tied},\\
0 & \text{if } s \text{ is preferred to } r.
\end{cases}
\]
Thus, \(b_{rr}=0.5\) for all \(r\in[n]\), and \(b_{rs}+b_{sr}=1\) for all \(r\neq s\).

The input of the OBOP is a \emph{pair order matrix} \(C=(c_{rs})_{n\times n}\), satisfying \(c_{rr}=0.5\) for all \(r\in[n]\),  \(c_{rs}\in[0,1]\), and \(c_{rs}+c_{sr}=1\) for all \(r,s\in[n]\) with \(r\neq s\). Every bucket matrix is a pair order matrix, but not every pair order matrix corresponds to a bucket order, since transitivity may fail and its entries are not restricted to the values \(0\), \(0.5\), and \(1\).

The \emph{Optimal Bucket Order Problem} \citep{gionis2006algorithms,ukkonen2009randomized} consists of finding a bucket order \(\mathcal{B}\), or equivalently its bucket matrix \(B\), minimizing
\[
D(B,C)=
\sum_{r=1}^{n}
\sum_{s=1}^{n}
|b_{rs}-c_{rs}|.
\]

Since the OBOP is NP-hard \citep{gionis2006algorithms}, several greedy and metaheuristic algorithms have been proposed \citep{ukkonen2009randomized,kenkre2011discovering,aledo2017utopia,aledo2018approaching,d2019median,aledo2021highly}. However, no exact optimization formulation has been proposed so far.

\subsection{An exact mixed-integer linear programming formulation for the {\rm OBOP}} \label{sec:ILP_OBOP}

As established in Section~\ref{sec:IPF}, the constraints of model~\eqref{mod:empates} guarantee that every feasible solution represents a weak order. Therefore, to formulate the OBOP within this framework, it remains to express the distance \(D(B,C)\) in terms of the variables \(x_{rs}\). The following proposition provides this link.

\begin{prop}
Let \(\bm{x}\) be any feasible solution of model~\eqref{mod:empates}, and let \(B\) be the bucket matrix induced by the corresponding weak order. Then, for every \(r,s\in[n]\) with \(r\neq s\),

\[
b_{rs}=
\frac{x_{rs}-x_{sr}+1}{2}
\qquad
\forall r,s\in[n]:\ r\neq s.
\]
Consequently, the {\rm OBOP} objective can be written as
\begin{equation}\label{eq:objetivo_valor_absoluto_red}
f({\rm OBOP},\bm{x}) =
2\sum_{r=1}^{n-1}\sum_{s=r+1}^{n}
\left|
\frac{x_{rs}-x_{sr}+1}{2}
-c_{rs}
\right|.
\end{equation}
\end{prop}

\begin{proof}
For any pair of distinct items \(r\) and \(s\), there are only three possible relations in a weak order: \(r\) precedes \(s\), \(r\) and \(s\) are tied, or \(s\) precedes \(r\). According to the definition of the variables \(\bm{x}\), the expression \((x_{rs}-x_{sr}+1)/2\) takes the values \(1\), \(0.5\), and \(0\), respectively. These are precisely the corresponding values of \(b_{rs}\), and hence \(b_{rs}=(x_{rs}-x_{sr}+1)/2\).

Since \(b_{rr}=c_{rr}=0.5\), the diagonal terms in \(D(B,C)\) vanish. Moreover, using \(b_{sr}=1-b_{rs}\) and \(c_{sr}=1-c_{rs}\), the contributions of \((r,s)\) and \((s,r)\) coincide. Therefore, the {\rm OBOP} objective can be written as twice the sum over pairs with \(r<s\), yielding~\eqref{eq:objetivo_valor_absoluto_red}.
\end{proof}

To linearize~\eqref{eq:objetivo_valor_absoluto_red}, we introduce continuous auxiliary variables \(d_{rs}\), for all \(r,s\in[n]\) with \(r<s\), representing the absolute deviation between \(b_{rs}\) and \(c_{rs}\). The objective function becomes
\begin{equation}\label{eq:objetivo_linealizado}
f({\rm OBOP},\bm{x})=
2\sum_{r=1}^{n}\sum_{s=r+1}^{n} d_{rs},
\end{equation}
together with the linear constraints
\begin{subequations}\label{eq:linearizacion_OBOP}
    \begin{align}
        d_{rs} &\ge 
        \frac{x_{rs}-x_{sr}+1}{2}-c_{rs}
        && \forall r,s\in[n]:\ r<s, \label{eq:lin1_1} \\
        d_{rs} &\ge 
        c_{rs}-\frac{x_{rs}-x_{sr}+1}{2}
        && \forall r,s\in[n]:\ r<s. \label{eq:lin2_1}
    \end{align}
\end{subequations}

Combining the weak-order constraints with this linearization yields the following exact mixed-integer linear programming formulation for the OBOP:
\begin{subequations}\label{mod:OBOP}
    \begin{align} 
    \min\quad 
        & 2\sum_{r=1}^{n}\sum_{s=r+1}^{n} d_{rs} \label{eq:obj_OBOP} \\
    \text{s.t.}\quad
        & \eqref{eq:antes_o_despues_1},\ \eqref{eq:transitividad_1},\ \eqref{eq:lin1_1},\ \eqref{eq:lin2_1}, \\
        & x_{rs}\in\{0,1\}
        && \forall r,s\in [n]:\ r\neq s. \label{eq:naturaleza_x_OBOP}
    \end{align}
\end{subequations}

Constraints~\eqref{eq:antes_o_despues_1} and~\eqref{eq:transitividad_1} enforce the weak-order structure, while constraints~\eqref{eq:lin1_1} and~\eqref{eq:lin2_1} linearize the absolute deviations in the OBOP objective. Therefore, model~\eqref{mod:OBOP} solves the OBOP exactly.

\begin{remark}
The factor \(2\) in objective function~\eqref{eq:obj_OBOP} could be omitted without changing the set of optimal solutions. We keep it in order to preserve the objective-value scale used in the OBOP literature.
\end{remark}

We conclude this section with a small instance that will be used as a running example throughout the paper to illustrate the effect of the different constraints introduced later.

\begin{example}\label{ex:ejemplo_1}
We consider the following instance of the {\rm OBOP} with \(8\) items, which will be used as a running example throughout the paper:
\[
C=\frac{1}{100}
\begin{pmatrix}
50 & 52 & 46 & 72 & 60 & 70 & 82 & 90 \\
48 & 50 & 10 & 42 & 60 & 90 & 22 & 78 \\
54 & 90 & 50 & 76 & 98 & 85 & 82 & 80 \\
28 & 58 & 24 & 50 & 80 & 76 & 65 & 80 \\
40 & 40 &  2 & 20 & 50 & 55 &  0 & 20 \\
30 & 10 & 15 & 24 & 45 & 50 & 50 &  0 \\
18 & 78 & 18 & 35 &100 & 50 & 50 & 90 \\
10 & 22 & 20 & 20 & 80 &100 & 10 & 50
\end{pmatrix}.
\]
Solving this instance with model~\eqref{mod:OBOP} yields the unique optimum
\[
{\rm 1\ 3\ \mid \ 2\ 4\ 7\ \mid \ 8\ \mid \ 5\ 6}
\]
with an objective value of \(10.78\).
\end{example}

\section{Weak-order aggregation under additional constraints}

In this section, we extend the weak-order aggregation framework by incorporating additional constraints on the structure and composition of the consensus ranking. These constraints allow the decision maker to prescribe the number or size of buckets, focus on the top-ranked items, or enforce group-based fairness requirements. Although the models are stated for a general aggregation objective \(f(\Pi,\bm{x})\), they can be directly particularized to the OBOP using the objective function and linearization introduced in Section~\ref{sec:ILP_OBOP}. 

\subsection{Models with a fixed number \texorpdfstring{$p$}{p} of buckets}
\label{sec:numero_fijo}

In this section, we extend the weak-order framework to the case where the consensus ranking is required to contain exactly \(p\) nonempty buckets, with \(1\le p\le n\). We refer to this setting as the \(p\)-bucket problem. A related heuristic median-ranking problem with a fixed number of buckets was considered in \cite{d2019median}; here, we develop exact formulations within a general optimization framework.

This setting arises naturally in classification and rating systems where items are grouped into a limited number of ordered categories, such as tiered university rankings, rating scales, or award levels. We present two alternative formulations: one based on item--bucket assignment variables and another based on representative items. The objective function is left general as \(f(\Pi,\bm{x})\), so both formulations can be particularized to different aggregation problems.

For the first model, we define
\[
y_{ru}=
\begin{cases}
1 & \text{if item } r \text{ is assigned to bucket } u,\\
0 & \text{otherwise},
\end{cases}
\qquad
\forall r\in[n],\ \forall u\in[p].
\]
Here, bucket \(u\) represents the \(u\)-th position in the weak order. The assignment-based formulation is given by
\begin{subequations} \label{mod:p-bucket_1}
    \begin{align} 
    \min\quad & f(\Pi,\bm{x}) \\
    \text{s.t.}\quad 
        & \sum_{u=1}^p y_{ru}=1
        && \forall r\in[n], \label{eq:elemento_en_bucket_3} \\
        & \sum_{r=1}^n y_{ru}\ge 1
        && \forall u\in[p], \label{eq:buckets_no_vacios_3} \\
        & y_{ru}+y_{su}\le x_{rs}+x_{sr}
        && \forall r,s\in[n],\ \forall u\in[p]:\ r<s, \label{eq:empates_bucket_3} \\
        & x_{sr}+\sum_{v=1}^{u}y_{rv}
        +\sum_{v=u+1}^{p}y_{sv}\le 2
        && \forall r,s\in[n],\ \forall u\in[p-1]:\ r\neq s, \label{eq:buckets_ordenados_3} \\
        & x_{rs}\in\{0,1\}
        && \forall r,s\in[n]:\ r\neq s, \label{eq:naturaleza_x_3} \\
        & y_{ru}\in\{0,1\}
        && \forall r\in[n],\ \forall u\in[p]. \label{eq:naturaleza_y_3}
    \end{align}
\end{subequations}

Constraints~\eqref{eq:elemento_en_bucket_3} assign each item to exactly one bucket, while constraints~\eqref{eq:buckets_no_vacios_3} ensure that all \(p\) buckets are nonempty. Constraints~\eqref{eq:empates_bucket_3} force items assigned to the same bucket to be tied, and constraints~\eqref{eq:buckets_ordenados_3} ensure consistency between the order of the buckets and the pairwise variables \(x_{rs}\). In particular, if item \(r\) is assigned to bucket \(u\) or to an earlier bucket, and item \(s\) is assigned to a later bucket, then \(s\) cannot be ranked ahead of \(r\).

The assignment-based formulation can be easily adapted when the size of each bucket is known in advance. Since bucket \(u\) explicitly represents the \(u\)-th position in the weak order, prescribed bucket sizes can be imposed by replacing constraints~\eqref{eq:buckets_no_vacios_3} with
\begin{equation}\label{eq:prescribed_bucket_sizes_assignment}
    \sum_{r=1}^n y_{ru}=q_u
    \qquad \forall u\in[p],
\end{equation}
where \(q_u\) denotes the prescribed size of bucket \(u\), with \(q_u\ge 1\) and \(\sum_{u=1}^p q_u=n\). The equal-size case is obtained as the particular case \(q_u=q\) for all \(u\in[p]\). The theoretical properties stated below remain valid under this replacement.

For completeness, we note that the constraints from the base weak-order model~\eqref{mod:empates} are redundant in this formulation; they remain valid inequalities for model~\eqref{mod:p-bucket_1}. We formalize this result below.

\begin{prop}\label{prop:des_valida_1}
The base inequalities~\eqref{eq:antes_o_despues_1} and~\eqref{eq:transitividad_1}, defining the weak-order structure, are valid inequalities for model~\eqref{mod:p-bucket_1}.
\end{prop}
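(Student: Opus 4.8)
The plan is to show that every feasible point $(\bm{x},\bm{y})$ of model~\eqref{mod:p-bucket_1} satisfies \eqref{eq:antes_o_despues_1} and \eqref{eq:transitividad_1}, which is exactly the claim that these are valid inequalities. Throughout, I would exploit that constraint~\eqref{eq:elemento_en_bucket_3} assigns to each item $r$ a unique bucket, denoted $\beta(r)\in[[p]]$ (the unique index $u$ with $y_{ru}=1$). The entire argument then reduces to translating the $x$-variables into statements about the integers $\beta(r)$.

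First I would dispatch comparability~\eqref{eq:antes_o_despues_1}. Suppose, for contradiction, that $x_{rs}=x_{sr}=0$ for some pair $r<s$. Then constraint~\eqref{eq:empates_bucket_3} forces $y_{ru}+y_{su}\le 0$ for every $u\in[[p]]$; summing these $p$ inequalities and invoking~\eqref{eq:elemento_en_bucket_3} twice yields $2=\sum_{u}y_{ru}+\sum_{u}y_{su}\le 0$, a contradiction. Hence $x_{rs}+x_{sr}\ge 1$ always holds. This step is short and relies only on \eqref{eq:empates_bucket_3} together with \eqref{eq:elemento_en_bucket_3}.

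The core of the proof is the equivalence $x_{rs}=1 \iff \beta(r)\le\beta(s)$, which I would establish by cases. When $\beta(r)=\beta(s)=u$, constraint~\eqref{eq:empates_bucket_3} gives $x_{rs}+x_{sr}\ge y_{ru}+y_{su}=2$, so $x_{rs}=x_{sr}=1$. When $\beta(r)<\beta(s)$, I would instantiate the ordering constraint~\eqref{eq:buckets_ordenados_3} at $u=\beta(r)$, which lies in $[[p-1]]$ precisely because $\beta(r)<\beta(s)\le p$: there $\sum_{v=1}^{u}y_{rv}=1$ and $\sum_{v=u+1}^{p}y_{sv}=1$, so the constraint forces $x_{sr}=0$, and comparability then gives $x_{rs}=1$. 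The symmetric instantiation, swapping the roles of $r$ and $s$, shows that $\beta(r)>\beta(s)$ forces $x_{rs}=0$, and together the two cases complete the equivalence.

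With the equivalence in hand, transitivity~\eqref{eq:transitividad_1} is immediate: since the variables are binary, the inequality can only be violated when $x_{rs}=x_{st}=1$, which by the equivalence means $\beta(r)\le\beta(s)$ and $\beta(s)\le\beta(t)$; transitivity of $\le$ on $[[p]]$ then yields $\beta(r)\le\beta(t)$, hence $x_{rt}=1$ and the inequality is satisfied. I expect the main obstacle to be the careful bookkeeping inside the ordering constraint~\eqref{eq:buckets_ordenados_3}, in particular verifying that the chosen index $u=\beta(r)$ always falls in the admissible range $[[p-1]]$ and that the two partial sums of $y$-variables collapse to $1$, since this is where the notion of ``earlier bucket'' is encoded; the comparability and transitivity steps are then essentially formal consequences.
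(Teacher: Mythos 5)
Your proof is correct and follows essentially the same route as the paper's: comparability is derived from \eqref{eq:elemento_en_bucket_3} together with \eqref{eq:empates_bucket_3}, and transitivity from the bucket-ordering constraints \eqref{eq:buckets_ordenados_3}. If anything, your explicit equivalence $x_{rs}=1 \iff \beta(r)\le\beta(s)$ makes rigorous the final step (passing from ``$r$ is not placed after $t$'' to $x_{rt}=1$, which needs the case split between equal and strictly earlier buckets) that the paper's proof leaves implicit.
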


\begin{proof}
Let \(r,s\in[n]\) with \(r<s\). By~\eqref{eq:elemento_en_bucket_3}, there exists some \(u\in[p]\) such that \(y_{ru}=1\). Applying~\eqref{eq:empates_bucket_3} to \((r,s,u)\) yields \(1+y_{su}\le x_{rs}+x_{sr}\), and therefore \(x_{rs}+x_{sr}\ge 1\). Hence~\eqref{eq:antes_o_despues_1} holds.

For transitivity, let \(r,s,t\in[n]\) be pairwise distinct and assume that \(x_{rs}=x_{st}=1\). By~\eqref{eq:buckets_ordenados_3}, item \(r\) cannot be placed in a bucket after the bucket of \(s\), and item \(s\) cannot be placed in a bucket after the bucket of \(t\). Hence, item \(r\) cannot be placed in a bucket after the bucket of \(t\). If \(r\) and \(t\) belong to the same bucket, then they are tied and \(x_{rt}=1\) by~\eqref{eq:empates_bucket_3}. If the bucket of \(r\) is strictly before the bucket of \(t\), then \(r\) is ranked ahead of \(t\), so \(x_{tr}=0\) by~\eqref{eq:buckets_ordenados_3}, and the comparability inequality already proved gives \(x_{rt}=1\). Thus~\eqref{eq:transitividad_1} is satisfied.
\end{proof}

Moreover, the integrality of variables \(\bm{x}\) can be relaxed, as formalized below.

\begin{prop}\label{prop:relax_x_assignment}
The integrality constraints~\eqref{eq:naturaleza_x_3} on variables \(\bm{x}\) can be relaxed.
\end{prop}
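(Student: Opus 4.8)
The plan is to show that, after replacing the integrality requirement~\eqref{eq:naturaleza_x_3} by the box constraints $0 \le x_{rs} \le 1$, every feasible point of the relaxed model already has binary $\bm{x}$. Since the proposition concerns only the $\bm{x}$-variables, the $\bm{y}$-variables remain binary throughout; the argument therefore fixes an arbitrary $\bm{y}$ satisfying \eqref{eq:elemento_en_bucket_3}--\eqref{eq:buckets_no_vacios_3} and proves that the constraints linking $\bm{x}$ to $\bm{y}$ pin down each $x_{rs}$ to a value in $\{0,1\}$ determined solely by the bucket assignment. This makes the relaxation harmless: the feasible set (and hence the set of optima for any objective $f(\Pi,\bm{x})$) is unchanged, so no fractional $\bm{x}$ can ever be selected.

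Concretely, by \eqref{eq:elemento_en_bucket_3} each item lies in a unique bucket, say $r\in B_a$ (i.e.\ $y_{ra}=1$) and $s\in B_b$. First I would treat the tied case $a=b$: constraint \eqref{eq:empates_bucket_3} for the pair $\{r,s\}$ and bucket $a$ gives $x_{rs}+x_{sr}\ge y_{ra}+y_{sa}=2$, which together with the relaxed upper bounds $x_{rs},x_{sr}\le 1$ forces $x_{rs}=x_{sr}=1$. Next I would treat the ordered case, say $a<b$. The key step is to invoke \eqref{eq:buckets_ordenados_3} with the index $u=a$ (more generally, any $u$ with $a\le u\le b-1$): such a $u$ is admissible since $1\le a$ and $a\le b-1\le p-1$, so $u\in[[p-1]]$. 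Under the binary assignment one has $\sum_{v=1}^{u} y_{rv}=1$ (because $a\le u$) and $\sum_{v=u+1}^{p} y_{sv}=1$ (because $u<b$), so \eqref{eq:buckets_ordenados_3} reads $x_{sr}+2\le 2$, forcing $x_{sr}=0$. Then \eqref{eq:empates_bucket_3} for $\{r,s\}$ and bucket $a$ gives $x_{rs}+x_{sr}\ge y_{ra}+y_{sa}=1$, whence $x_{rs}\ge 1$ and so $x_{rs}=1$. The case $a>b$ is symmetric, interchanging the roles of $r$ and $s$.

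I expect the only delicate point to be the bookkeeping in the ordered case: verifying that the selected index $u$ is admissible (i.e.\ $u\in[[p-1]]$) and that the two partial sums in \eqref{eq:buckets_ordenados_3} evaluate to exactly $1$ under the binary $\bm{y}$. Everything else is a direct combination of the relaxed bounds $0\le x_{rs}\le 1$ with the lower bounds supplied by \eqref{eq:empates_bucket_3}. Having exhausted the three cases, each $x_{rs}$ is forced to $0$ or $1$, so the box relaxation of \eqref{eq:naturaleza_x_3} has exactly the same feasible $\bm{x}$ as the original integer model, which establishes the claim.
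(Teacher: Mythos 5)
Your proof is correct and follows essentially the same route as the paper's: fix the buckets $a,b$ of $r$ and $s$ via \eqref{eq:elemento_en_bucket_3}, use \eqref{eq:empates_bucket_3} with the upper bounds to force $x_{rs}=x_{sr}=1$ when $a=b$, and use \eqref{eq:buckets_ordenados_3} to force one variable to $0$ and then the comparability inequality to force the other to $1$ when $a\neq b$. The only cosmetic difference is that you re-derive $x_{rs}+x_{sr}\ge 1$ directly from \eqref{eq:empates_bucket_3} instead of citing the valid inequality \eqref{eq:antes_o_despues_1} already established in Proposition~\ref{prop:des_valida_1}, and you spell out the admissibility of the index $u$ more explicitly.
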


\begin{proof}
Let \(r,s\in[n]\), with \(r\neq s\), and let \(u,v\in[p]\) be the buckets assigned to \(r\) and \(s\), respectively. If \(u=v\), then~\eqref{eq:empates_bucket_3}, together with the bounds of the linear relaxation, forces \(x_{rs}=x_{sr}=1\). If \(u<v\), constraint~\eqref{eq:buckets_ordenados_3} implies \(x_{sr}=0\), and by the valid inequality~\eqref{eq:antes_o_despues_1} we obtain \(x_{rs}=1\). The case \(u>v\) is symmetric. Hence, once the assignment variables are fixed, all variables \(x_{rs}\) are forced to take integer values.
\end{proof}

The above formulation fully characterizes the \(p\)-bucket problem using assignment variables. 
However, an alternative model can be obtained by introducing representative-based variables, 
a modelling device that has also been used in other combinatorial optimization problems involving 
partitions into classes, such as vertex coloring \citep{campelo2008asymmetric}. 
In our setting, representatives provide a different way of encoding the buckets of a weak order.

In this second formulation, we assume that the representative of each bucket is the item with the 
highest index among its members. Accordingly, if item \(r\) belongs to a bucket whose representative 
is \(s\), then \(r < s\). Based on this convention, instead of using the assignment variables \(y_{ru}\), 
we work with the following decision variables:
\[
\alpha_r=
\begin{cases}
1 & \text{if item } r \text{ is chosen as the representative of a bucket},\\
0 & \text{otherwise},
\end{cases}
\qquad \forall r\in[n],
\]
and
\[
\beta_{rs}=
\begin{cases}
1 & \text{if item } r \text{ belongs to the bucket represented by } s,\\
0 & \text{otherwise},
\end{cases}
\qquad \forall r,s\in[n]:\ r<s.
\]

The representative-based formulation is
\begin{subequations} \label{mod:p-bucket_2}
    \begin{align} 
    \min\quad & f(\Pi,\bm{x}) \\
    \text{s.t.}\quad 
        & \eqref{eq:antes_o_despues_1},\ \eqref{eq:transitividad_1} \\
        & \sum_{r=1}^n \alpha_r=p, \label{eq:p_buckets_4} \\
        & \beta_{rs}\le \alpha_s
        && \forall r,s\in[n]:\ r<s, \label{eq:asigniado_si_representante_4} \\
        & \sum_{s=r+1}^n \beta_{rs}+\alpha_r=1
        && \forall r\in[n], \label{eq:asignado_o_representante_4} \\
        & x_{rs}\ge \beta_{rs}
        && \forall r,s\in[n]:\ r<s, \label{eq:antes_si_mismo_bucket_4} \\
        & x_{sr}\ge \beta_{rs}
        && \forall r,s\in[n]:\ r<s, \label{eq:despues_si_mismo_bucket_4} \\
        & x_{rs}+x_{sr}+\alpha_r\le 2
        && \forall r,s\in[n]:\ r<s, \label{eq:un_representante_por_bucket_4} \\
        & x_{rs}\in\{0,1\}
        && \forall r,s\in[n]:\ r\neq s, \label{eq:naturaleza_x_4} \\
        & \alpha_r\in\{0,1\}
        && \forall r\in[n], \label{eq:naturaleza_alpha_4} \\
        & \beta_{rs}\in\{0,1\}
        && \forall r,s\in[n]:\ r<s. \label{eq:naturaleza_beta_4}
    \end{align}
\end{subequations}

Constraint~\eqref{eq:p_buckets_4} ensures that exactly \(p\) representatives are selected and therefore \(p\) buckets are created. Constraints~\eqref{eq:asigniado_si_representante_4} enforce that an item can only be assigned to a selected representative. Constraints~\eqref{eq:asignado_o_representante_4} require that every item is either a representative itself or is assigned to the bucket of a representative with larger index. Constraints~\eqref{eq:antes_si_mismo_bucket_4} and~\eqref{eq:despues_si_mismo_bucket_4} ensure that, whenever an item belongs to a bucket represented by another one, both items are tied. By transitivity, this implies that all items within the same bucket are tied. Finally, constraints~\eqref{eq:un_representante_por_bucket_4} guarantee that each bucket has exactly one representative, namely the largest-indexed item in that bucket.

In addition, we note that the following inequality, although not explicitly included in model~\eqref{mod:p-bucket_2}, is always satisfied by any feasible solution and can therefore be regarded as a valid inequality.

\begin{prop}\label{prop:empate_mismo_bucket_4}
The following constraints are valid inequalities for model~\eqref{mod:p-bucket_2}:
\begin{equation}
\beta_{rs}+\alpha_s\le x_{rs}+x_{sr}
\qquad \forall r,s\in[n]:\ r<s.
\label{eq:empate_mismo_bucket_4}
\end{equation}
\end{prop}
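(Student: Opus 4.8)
The plan is to exploit that every feasible solution of model~\eqref{mod:p-bucket_2} is binary, so it suffices to verify the inequality pointwise; I would fix a pair $r<s$ and argue by a short case distinction on the value of $\beta_{rs}$. The only ingredients needed are the comparability constraint~\eqref{eq:antes_o_despues_1}, which always forces $x_{rs}+x_{sr}\ge 1$, together with the three constraints that directly couple $\beta_{rs}$ to the other variables, namely~\eqref{eq:asigniado_si_representante_4}, \eqref{eq:antes_si_mismo_bucket_4} and~\eqref{eq:despues_si_mismo_bucket_4}.

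First I would dispose of the case $\beta_{rs}=0$. Here the left-hand side collapses to $\alpha_s$, which is at most $1$ by integrality, while the right-hand side is at least $1$ by~\eqref{eq:antes_o_despues_1}; hence $\beta_{rs}+\alpha_s\le x_{rs}+x_{sr}$ holds regardless of the actual values of $x_{rs}$ and $x_{sr}$. Next I would treat $\beta_{rs}=1$, the case in which the coupling constraints carry the argument: \eqref{eq:antes_si_mismo_bucket_4} and~\eqref{eq:despues_si_mismo_bucket_4} give $x_{rs}\ge 1$ and $x_{sr}\ge 1$, so $x_{rs}=x_{sr}=1$ and the right-hand side equals $2$; meanwhile~\eqref{eq:asigniado_si_representante_4} yields $\alpha_s\ge\beta_{rs}=1$, whence $\alpha_s=1$ and the left-hand side also equals $2$, so the inequality holds with equality. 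Combining the two cases establishes validity.

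I do not expect a genuine obstacle, since the computation is elementary; the one point deserving care is conceptual rather than technical. It must be made explicit that ``valid inequality'' is understood here in the integer-programming sense, that is, satisfied by all integer-feasible points, so that the binary case analysis is legitimate. Indeed, integrality cannot be dropped: a fractional assignment such as $(\beta_{rs},\alpha_s,x_{rs},x_{sr})=(0.3,1,0.3,0.7)$ already satisfies~\eqref{eq:antes_o_despues_1}, \eqref{eq:asigniado_si_representante_4}, \eqref{eq:antes_si_mismo_bucket_4} and~\eqref{eq:despues_si_mismo_bucket_4} yet violates~\eqref{eq:empate_mismo_bucket_4}, which confirms both that~\eqref{eq:empate_mismo_bucket_4} genuinely strengthens the linear relaxation and that the argument must rest on the binary nature of the variables.
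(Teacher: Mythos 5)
Your proof is correct and follows essentially the same two-case argument as the paper (the paper splits on $\beta_{rs}+\alpha_s\le 1$ versus $=2$, you split on $\beta_{rs}=0$ versus $1$, but the constraints invoked — \eqref{eq:antes_o_despues_1}, \eqref{eq:asigniado_si_representante_4}, \eqref{eq:antes_si_mismo_bucket_4}, \eqref{eq:despues_si_mismo_bucket_4} — and the reasoning are the same). Your closing observation that the inequality can cut off fractional points of the LP relaxation is a nice extra, but not part of the paper's proof.
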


\begin{proof}
If \(\beta_{rs}+\alpha_s\le 1\), the result follows from~\eqref{eq:antes_o_despues_1}. If \(\beta_{rs}+\alpha_s=2\), then \(\beta_{rs}=\alpha_s=1\), and by~\eqref{eq:antes_si_mismo_bucket_4}--\eqref{eq:despues_si_mismo_bucket_4}, we obtain \(x_{rs}=x_{sr}=1\). Hence~\eqref{eq:empate_mismo_bucket_4} holds in all cases.
\end{proof}

\begin{prop}\label{prop:alternative_representative}
By adding constraint~\eqref{eq:empate_mismo_bucket_4} to model~\eqref{mod:p-bucket_2} and removing constraints~\eqref{eq:antes_si_mismo_bucket_4} and~\eqref{eq:despues_si_mismo_bucket_4}, we obtain an alternative formulation of the problem.
\end{prop}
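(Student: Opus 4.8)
The plan is to show that the two models have identical integer feasible sets over the variables $\bm{x},\bm{\alpha},\bm{\beta}$; since both share the objective $f(\Pi,\bm{x})$, this makes them equivalent formulations of the $p$-bucket problem. Write $F_A$ for the feasible set of the original model~\eqref{mod:p-bucket_2} and $F_B$ for that of the model obtained by deleting~\eqref{eq:antes_si_mismo_bucket_4}--\eqref{eq:despues_si_mismo_bucket_4} and appending~\eqref{eq:empate_mismo_bucket_4}. The key observation is that $(B)$ keeps every constraint of $(A)$ except those two, and in particular retains the assignment bound~\eqref{eq:asigniado_si_representante_4} together with the comparability constraint~\eqref{eq:antes_o_despues_1} and the integrality requirements, which will carry the argument.

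The inclusion $F_A\subseteq F_B$ is immediate from Proposition~\ref{prop:empate_mismo_bucket_4}: any point of $F_A$ satisfies~\eqref{eq:empate_mismo_bucket_4} (shown there to be valid for $(A)$), and the remaining constraints of $(B)$ form a subset of those of $(A)$, so the point lies in $F_B$.

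For the reverse inclusion $F_B\subseteq F_A$, I would recover the two deleted constraints from those retained in $(B)$, as this is the only direction requiring real work. Fix $r<s$ and take a feasible integer point of $(B)$. If $\beta_{rs}=0$, then~\eqref{eq:antes_si_mismo_bucket_4} and~\eqref{eq:despues_si_mismo_bucket_4} reduce to $x_{rs}\ge 0$ and $x_{sr}\ge 0$, which hold trivially. If $\beta_{rs}=1$, then the retained bound~\eqref{eq:asigniado_si_representante_4} forces $\alpha_s=1$, so~\eqref{eq:empate_mismo_bucket_4} yields $x_{rs}+x_{sr}\ge\beta_{rs}+\alpha_s=2$; by integrality of the $x$-variables this forces $x_{rs}=x_{sr}=1\ge\beta_{rs}$, and both deleted constraints hold. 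Since no other constraint of $(A)$ is absent from $(B)$, the point lies in $F_A$.

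I expect the only point needing care is recognizing that the case $\beta_{rs}=1$ is closed not by~\eqref{eq:empate_mismo_bucket_4} in isolation but by combining it with the retained bound~\eqref{eq:asigniado_si_representante_4} and integrality: it is the implication $\beta_{rs}=1\Rightarrow\alpha_s=1$ that upgrades the aggregated lower bound on $x_{rs}+x_{sr}$ into the two separate bounds $x_{rs}\ge\beta_{rs}$ and $x_{sr}\ge\beta_{rs}$. With both inclusions established we obtain $F_A=F_B$, and because the objective depends only on $\bm{x}$, the equivalence of the two formulations follows.
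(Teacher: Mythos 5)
Your proposal is correct and follows essentially the same route as the paper: the forward inclusion comes from Proposition~\ref{prop:empate_mismo_bucket_4}, and the reverse inclusion recovers the deleted constraints by the case split on $\beta_{rs}$, using~\eqref{eq:asigniado_si_representante_4} to force $\alpha_s=1$ and then~\eqref{eq:empate_mismo_bucket_4} with the binary bounds to get $x_{rs}=x_{sr}=1$. The paper's proof states only this reverse direction explicitly, so your version is simply a slightly more complete write-up of the same argument.
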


\begin{proof}
Fix \(r<s\). If \(\beta_{rs}=0\), then the removed constraints hold trivially. If \(\beta_{rs}=1\), constraint~\eqref{eq:asigniado_si_representante_4} implies \(\alpha_s=1\), and~\eqref{eq:empate_mismo_bucket_4} gives \(x_{rs}=x_{sr}=1\), which satisfies both~\eqref{eq:antes_si_mismo_bucket_4} and~\eqref{eq:despues_si_mismo_bucket_4}. Hence, the added constraint reproduces the effect of the removed ones.
\end{proof}

The representative-based formulation can also be adapted to the equal-size case. If all buckets are required to contain exactly \(q\) items, with \(pq=n\), it is enough to add
\begin{equation}\label{eq:equal_bucket_sizes_representative}
    \sum_{r=1}^{s-1}\beta_{rs}=(q-1)\alpha_s
    \qquad \forall s\in[n].
\end{equation}
Indeed, if item \(s\) is selected as a representative, then exactly \(q-1\) additional items must be assigned to its bucket; otherwise, no item can be assigned to \(s\).

Unlike the assignment-based formulation, the representative-based model does not index buckets by rank position. Therefore, arbitrary prescribed sizes \((q_1,\dots,q_p)\) cannot be imposed directly without introducing additional variables linking representatives to ordered bucket positions.

To illustrate the effect of prescribing the bucket structure, we revisit the running example introduced in Example~\ref{ex:ejemplo_1}.

\begin{example}\label{ex:ejemplo_2}
Continuing with Example~\ref{ex:ejemplo_1}, we consider several constrained versions of the {\rm OBOP}. First, suppose that only the number of buckets is fixed. Adapting models~\eqref{mod:p-bucket_1} and~\eqref{mod:p-bucket_2} to the {\rm OBOP}, for \(p=2\) and \(p=5\) we obtain, respectively,
\[
{\rm 1\ 2\ 3\ 4\ 7 \mid 5\ 6\ 8}
\qquad\text{and}\qquad
{\rm 1\ 3 \mid 4\ 7 \mid 2 \mid 8 \mid 5\ 6},
\]
with objective values \(12.14\) and \(11.34\). When \(p=4\), the solution coincides with the unconstrained {\rm OBOP} optimum, as expected.

If equal bucket sizes are imposed, the optima for two buckets of capacity four and four buckets of capacity two are
\[
{\rm 1\ 3\ 4\ 7 \mid 2\ 5\ 6\ 8}
\qquad\text{and}\qquad
{\rm 1\ 3 \mid 4\ 7 \mid 2\ 8 \mid 5\ 6},
\]
with objective values \(13.14\) and \(11.46\), respectively. Thus, fixing not only the number of buckets but also their sizes may change the optimal bucket order.

Finally, using the assignment-based formulation, we may prescribe different bucket sizes by rank position. For capacities \((1,3,4)\) and \((1,2,2,3)\), the optima are
\[
{\rm 3 \mid 1\ 4\ 7 \mid 2\ 5\ 6\ 8}
\qquad\text{and}\qquad
{\rm 3 \mid 1\ 4 \mid 2\ 7 \mid 5\ 6\ 8},
\]
with objective values \(13.66\) and \(13.78\). For capacities \((2,3,1,2)\), which match the bucket sizes of the unconstrained optimum, the solution coincides with the unconstrained {\rm OBOP} optimum, as expected.
\end{example}

\subsection{Tail-Collapsed Upper-\texorpdfstring{$k$}{k}}
\label{Sec:TCU}

In many applications, identifying the most relevant \(k\) items is more informative than producing a complete ranking of all \(n\) items. In a weak-order setting, however, extracting a top-\(k\) list from a complete solution may be ambiguous, since the boundary between the selected and non-selected items may cut through a bucket of tied elements. Several approaches to top-\(k\) rank aggregation have been studied in the literature; see, for instance, \cite{lin2010rank}.

We address this issue by introducing the Tail-Collapsed Upper-\(k\) problem, with \(k \in [n-1]\). The idea is to force the last \(n-k\) items to form a single final bucket, while the upper part, containing exactly \(k\) items, remains free to adopt any weak-order structure. Thus, the model does not simply truncate a complete ranking, nor does it force the top \(k\) items to be tied in a single bucket. Instead, it selects the best weak order among those in which the lower \(n-k\) items are treated as interchangeable.

This approach is therefore different from both the classical top-\(k\) problem and a two-bucket formulation with a first bucket of size \(k\). A truncation-based method first solves the full problem and then keeps the first \(k\) positions, possibly requiring an arbitrary tie-breaking rule. A two-bucket formulation, on the other hand, forces the selected \(k\) items to form a single undifferentiated group. The Tail-Collapsed Upper-\(k\) formulation preserves the internal structure of the selected items while collapsing only the tail.

To formulate the problem, we introduce the variables
\[
z_r=
\begin{cases}
1 & \text{if item } r \text{ belongs to the last bucket},\\
0 & \text{otherwise},
\end{cases}
\qquad \forall r\in[n].
\]

The formulation is given by
\begin{subequations} \label{mod:top-k}
    \begin{align} 
    \min\quad & f(\Pi,\bm{x}) \\
    \text{s.t.}\quad 
        & \eqref{eq:antes_o_despues_1},\ \eqref{eq:transitividad_1} \\
        & \sum_{r\in[n]} z_r = n-k, \label{eq:ultimo_bucket_7} \\
        & x_{rs}+z_r \le 1+z_s
        && \forall r,s\in[n]: r\neq s, \label{eq:ultimo_bucket_empate_7} \\
        & x_{rs}\ge z_s
        && \forall r,s\in[n]: r\neq s, \label{eq:ultimo_bucket_detras_7} \\
        & x_{rs}\in\{0,1\}
        && \forall r,s\in[n]: r\neq s, \label{eq:naturaleza_x_7} \\
        & z_r\in\{0,1\}
        && \forall r\in[n]. \label{eq:naturaleza_z_7}
    \end{align}
\end{subequations}

Constraint~\eqref{eq:ultimo_bucket_7} fixes the size of the last bucket to \(n-k\). Constraints~\eqref{eq:ultimo_bucket_empate_7} and~\eqref{eq:ultimo_bucket_detras_7} ensure that every item in the tail is tied with the other tail items and is ranked strictly after every item outside the tail.

We illustrate the formulation using the running example introduced in Example~\ref{ex:ejemplo_1}.

\begin{example}\label{ex:ejemplo_6}
Using the same data as in Example~\ref{ex:ejemplo_1}, we solve the {\rm TCU-\(k\) OBOP}, obtained by adapting formulation~\eqref{mod:top-k} to the {\rm OBOP}. For \(k=4\) and \(k=7\), the optima are
\[
{\rm 1\ 3 \mid 4\ 7 \mid\mid 2\ 5\ 6\ 8}
\qquad\text{and}\qquad
{\rm 1\ 3 \mid 2\ 4\ 7 \mid 8 \mid 5 \mid\mid 6},
\]
with objective values \(12.66\) and \(11.58\), respectively. Hereafter, the symbol \(\mid\mid\) denotes the separation between the upper part, containing \(k\) items, and the collapsed tail. For \(k=6\), the model reproduces the unique optimal ranking of the unconstrained {\rm OBOP}.
\end{example}

The following example shows that the Tail-Collapsed Upper-\(k\) problem is not equivalent to truncating an optimal OBOP solution, nor to imposing two fixed buckets.

\begin{example}\label{ex:contraejemplo_top-k}
Consider an instance with \(10\) items and pair order matrix
\[
C=\frac{1}{100}
\begin{pmatrix}
  50 & 82 & 12 & 77 &  5 & 91 & 24 & 80 & 15 & 76 \\
  18 & 50 & 88 & 22 & 79 & 10 &  7 & 95 & 25 & 79 \\
  88 & 12 & 50 & 81 & 23 & 90 & 75 & 14 &  5 & 83 \\
  23 & 78 & 19 & 50 & 80 &  0 & 25 & 85 & 11 & 97 \\
  95 & 21 & 77 & 20 & 50 &  8 & 89 & 76 &  2 & 75 \\
   9 & 90 & 10 &100 & 92 & 50 & 78 & 18 & 84 & 20 \\
  76 & 93 & 25 & 75 & 11 & 22 & 50 & 79 &  9 & 85 \\
  20 &  5 & 86 & 15 & 24 & 82 & 21 & 50 & 77 & 13 \\
  85 & 75 & 95 & 89 & 98 & 16 & 91 & 23 & 50 & 80 \\
  24 & 21 & 17 &  3 & 25 & 80 & 15 & 87 & 20 & 50
\end{pmatrix}.
\]

Solving the unconstrained {\rm OBOP} yields exactly two optima,
\[
{\rm 6 \mid 9 \mid 5 \mid 3\ 7 \mid 1 \mid 4 \mid 2 \mid 10 \mid 8}
\qquad\text{and}\qquad
{\rm 6 \mid 9 \mid 5 \mid 3 \mid 7 \mid 1 \mid 4 \mid 2 \mid 10 \mid 8},
\]
both with objective value \(13.14\). The only difference is whether items \(3\) and \(7\) are tied or strictly ordered.

For \(k=6\), the {\rm TCU-\(k\) OBOP} yields
\[
{\rm 9 \mid 3 \mid 1\ 6\ 7 \mid 4 \mid\mid 2\ 5\ 8\ 10}
\]
with objective value \(28.9\). This solution does not contain the same upper set as the unconstrained {\rm OBOP} optima. Indeed, the best rankings whose collapsed tail coincides with the tail obtained by truncating the unconstrained {\rm OBOP} solutions are
\[
{\rm 6 \mid 9 \mid 5 \mid 3\ 7 \mid 1 \mid\mid 2\ 4\ 8\ 10}
\qquad\text{and}\qquad
{\rm 6 \mid 9 \mid 5 \mid 3 \mid 7 \mid 1 \mid\mid 2\ 4\ 8\ 10},
\]
both with objective value \(29.1\), which is worse than the {\rm TCU-\(k\)} optimum.

Similarly, for \(k=3\), the {\rm TCU-\(k\) OBOP} yields
\[
{\rm 6 \mid 9 \mid 5 \mid\mid 1\ 2\ 3\ 4\ 7\ 8\ 10}
\]
with objective value \(30.1\). By contrast, the two-bucket variant with a first bucket of size \(k=3\) gives
\[
{\rm 6\ 7\ 9 \mid 1\ 2\ 3\ 4\ 5\ 8\ 10}
\]
with objective value \(31.38\). Moreover, visually splitting the unconstrained {\rm OBOP} ranking into two blocks of sizes \(3\) and \(7\) gives
\[
{\rm 5\ 6\ 9 \mid 1\ 2\ 3\ 4\ 7\ 8\ 10}
\]
with objective value \(32.06\). This illustrates that the {\rm TCU-\(k\)} formulation optimizes a different feasible region: it searches for the best weak order with a collapsed tail of size \(n-k\), rather than truncating or coarsening an independently computed ranking.
\end{example}

\begin{remark}
The previous example shows that changing the imposed bucket structure may substantially alter the optimal grouping. This phenomenon is not specific to weak-order aggregation. Similar behavior appears in non-hierarchical clustering methods, such as \(k\)-means, and in facility-location problems, such as the \(p\)-median problem, where changing the number or size of groups can lead to different optimal configurations; see \cite{laporte2019location} for a survey.
\end{remark}

\subsection{Incorporating fairness constraints} \label{sec:fairness}

Fairness is a central concern in ranking and recommendation systems, where it is often necessary to regulate the representation of different item groups by promoting some groups or limiting the dominance of others. Recent work in rank aggregation has introduced mechanisms to address protected attributes, enabling group-aware consensus rankings \citep{chakraborty2022fair,pitoura2022fairness}. For instance, an academic ranking may require a minimum proportion of students from underrepresented schools among top positions, or limit the share of any single school.

Building on \cite{chakraborty2025improved}, who define fairness in linear orderings by controlling group proportions in top ranks, we extend this idea to a bucket-based setting with ties. We incorporate fairness constraints into the weak-order framework using assignment variables \(\bm{y}\), as in model~\eqref{mod:p-bucket_1}, allowing up to \(n\) buckets. Empty buckets are placed at the end to preserve the order of filled ones.

Let \(G_1,\dots,G_g\subseteq[n]\) be predefined item groups, and let \(\lambda_{i\ell},\mu_{i\ell}\in[0,1]\) denote lower and upper bounds on the desired proportion of items from group \(G_i\) within the first \(\ell\) buckets. If no fairness requirement is imposed for a given pair \((i,\ell)\), we take the default values \(\lambda_{i\ell}=0\) and \(\mu_{i\ell}=1\). The corresponding integer constraints are
\begin{equation}\label{eq:fairness_integer}
\biggl\lfloor 
\lambda_{i\ell}\sum_{r=1}^n\sum_{u=1}^{\ell}y_{ru}
\biggr\rfloor
\le
\sum_{s\in G_i}\sum_{u=1}^{\ell}y_{su}
\le
\biggl\lceil 
\mu_{i\ell}\sum_{r=1}^n\sum_{u=1}^{\ell}y_{ru}
\biggr\rceil,
\qquad
\forall i\in[g],\ \forall \ell\in[n].
\end{equation}

The floor and ceiling operators are needed because proportional requirements must be interpreted in an integer setting. For example, requiring a \(50\%\) representation in a prefix containing three items should allow either one or two items from the group, rather than making the model infeasible. Moreover, the constraints are imposed cumulatively on the first \(\ell\) buckets, rather than bucket by bucket. This follows the usual approach in fair ranking \citep{wei2022rank,chakraborty2025improved} and avoids the weakness of per-bucket constraints, which may become almost nonbinding when buckets contain very few items.

The following example illustrates both issues. First, it shows why the floor and ceiling operators are needed to make proportional requirements meaningful in a discrete setting. Then, it explains why these requirements must be imposed cumulatively over prefixes, rather than independently on each bucket.

\begin{example}
Consider items $[6]$ and one protected group \(G_1=\{1,2,3\}\). Suppose that we require a \(50\%\) representation of \(G_1\) in the first two cumulative bucket prefixes, that is, \(\lambda_{1\ell}=\mu_{1\ell}=0.5\) for \(\ell\in\{1,2\}\). All remaining items form the complementary group, for which no bounds are imposed.

First, the floor and ceiling operators are necessary because the size of a cumulative prefix need not be even. For instance, consider the bucket order
\[
{\rm 1\ 4\ \big|\ 2\ \big|\ 3\ 5\ 6}.
\]
The first two buckets contain three items, namely \(\{1,2,4\}\), two of which belong to \(G_1\). Requiring an exact \(50\%\) proportion would demand \(1.5\) items from \(G_1\), which is not meaningful in an integer setting. With the rounded bounds, however, the requirement becomes
\[
\lfloor 0.5\cdot 3\rfloor = 1
\qquad\text{and}\qquad
\lceil 0.5\cdot 3\rceil = 2,
\]
so this bucket order is feasible.

Second, the bounds must be imposed cumulatively rather than independently on each bucket. Indeed, for a one-item bucket, the rounded \(50\%\) bounds are \(0\) and \(1\), so either group may occupy that bucket. Therefore, purely local constraints would allow a ranking such as
\[
{\rm 4\ \big|\ 5\ \big|\ 6\ \big|\ 1\ \big|\ 2\ \big|\ 3},
\]
even though group \(G_1\) is entirely absent from the upper part of the ranking. By contrast, cumulative bounds prevent this behaviour by controlling representation over prefixes of buckets. For example,
\[
{\rm 1\ \big|\ 4\ \big|\ 5\ \big|\ 2\ \big|\ 3\ \big|\ 6}
\]
satisfies the required \(50\%\) representation in the relevant cumulative prefixes.

Thus, rounding ensures that proportional fairness is meaningful for prefixes of odd cardinality, while cumulative constraints prevent protected groups from being systematically postponed or overrepresented in the upper part of the ranking.
\end{example}

The parameters \(\lambda\) and \(\mu\) must be chosen consistently. In particular, when the groups form a partition, natural feasibility requirements include \(\sum_{i\in[g]}\lambda_{i\ell}\le 1\) and \(\sum_{i\in[g]}\mu_{i\ell}\ge 1\) for all \(\ell\in[n]\). Moreover, at the final prefix, which contains all items, the bounds must be compatible with the actual group proportions; in particular,
\[
\lambda_{in}\le \frac{|G_i|}{n}\le \mu_{in}
\qquad \forall i\in[g].
\]
Thus, the parameters should be calibrated to reflect realistic group proportions while preserving feasibility.

To keep the formulation linear, we replace the floor and ceiling operators with equivalent linear inequalities. Write
\[
\lambda_{i\ell}=\frac{\eta_{i\ell}}{\rho_{i\ell}},
\qquad
\mu_{i\ell}=\frac{\kappa_{i\ell}}{\tau_{i\ell}},
\]
as irreducible fractions with positive denominators. The following result gives the required linearization.

\begin{lemma}\label{lem:linearization}
The integer fairness constraints~\eqref{eq:fairness_integer} are equivalent, under integer variables, to the following linear inequalities:
\begin{subequations}\label{eq:linearized_fairness}
\begin{align}
& \eta_{i\ell}\sum_{r=1}^n\sum_{u=1}^{\ell}y_{ru}
\le
\rho_{i\ell}\sum_{s\in G_i}\sum_{u=1}^{\ell}y_{su}
+(\rho_{i\ell}-1),
&& \forall i\in[g],\ \forall \ell\in[n],
\label{eq:linearized_fairness_lower}\\
& \tau_{i\ell}\sum_{s\in G_i}\sum_{u=1}^{\ell}y_{su}
-(\tau_{i\ell}-1)
\le
\kappa_{i\ell}\sum_{r=1}^n\sum_{u=1}^{\ell}y_{ru},
&& \forall i\in[g],\ \forall \ell\in[n].
\label{eq:linearized_fairness_upper}
\end{align}
\end{subequations}
\end{lemma}

\begin{proof}
Fix \(i\in[g]\) and \(\ell\in[n]\), and set
\[
T=\sum_{r=1}^n\sum_{u=1}^{\ell}y_{ru},
\qquad
S=\sum_{s\in G_i}\sum_{u=1}^{\ell}y_{su}.
\]
Both \(T\) and \(S\) are integer. For the lower bound,
\[
\lfloor \lambda_{i\ell}T\rfloor \le S
\Longleftrightarrow
\lambda_{i\ell}T<S+1
\Longleftrightarrow
\eta_{i\ell}T<\rho_{i\ell}S+\rho_{i\ell}
\Longleftrightarrow
\eta_{i\ell}T\le \rho_{i\ell}S+(\rho_{i\ell}-1),
\]
where the last equivalence follows from integrality. Similarly, for the upper bound,
\[
S\le \lceil \mu_{i\ell}T\rceil
\Longleftrightarrow
S<\mu_{i\ell}T+1
\Longleftrightarrow
\tau_{i\ell}S<\kappa_{i\ell}T+\tau_{i\ell}
\Longleftrightarrow
\tau_{i\ell}S-(\tau_{i\ell}-1)\le \kappa_{i\ell}T.
\]
This proves the equivalence.
\end{proof}

We now integrate these fairness conditions into the weak-order framework. Since the number of nonempty buckets is not fixed in advance, we use \(n\) potential buckets and force empty buckets, if any, to appear only at the end of the ranking:
\begin{subequations}\label{mod:fairness}
    \begin{align}
    \min\quad & f(\Pi,\bm{x})\\
    \text{s.t.}\quad
        & \sum_{u=1}^n y_{ru}=1
        && \forall r\in[n], \label{eq:elemento_en_bucket_8}\\
        & \sum_{s=1}^n y_{su}\ge \sum_{v=u+1}^n y_{rv}
        && \forall r,u\in[n]: u<n, \label{eq:buckets_llenos_en_orden_8}\\
        & y_{ru}+y_{su}\le x_{rs}+x_{sr}
        && \forall r,s,u\in[n]: r<s, \label{eq:empates_bucket_8}\\
        & x_{sr}+\sum_{v=1}^{u}y_{rv}
        +\sum_{v=u+1}^{n}y_{sv}\le 2
        && \forall r,s,u\in[n]: r\neq s,\ u<n, \label{eq:buckets_ordenados_8}\\
        & \eqref{eq:linearized_fairness_lower},\ \eqref{eq:linearized_fairness_upper}\\
        & x_{rs}\in\{0,1\}
        && \forall r,s\in[n]: r\neq s, \label{eq:naturaleza_x_8}\\
        & y_{ru}\in\{0,1\}
        && \forall r,u\in[n]. \label{eq:naturaleza_y_8}
    \end{align}
\end{subequations}

Constraints~\eqref{eq:elemento_en_bucket_8}, \eqref{eq:empates_bucket_8}, and~\eqref{eq:buckets_ordenados_8} play the same role as in the assignment-based formulation. Constraint~\eqref{eq:buckets_llenos_en_orden_8} ensures that empty buckets appear only after all nonempty buckets. Constraints~\eqref{eq:linearized_fairness_lower}--\eqref{eq:linearized_fairness_upper} impose the prescribed group proportions on cumulative prefixes of the bucket order.

Specific choices of \(\lambda\) and \(\mu\) recover existing fairness notions. For instance, in \cite{wei2022rank}, a ranking is deemed fair when both bounds are fixed to the global proportion of each group, that is,
\[
\lambda_{i\ell}=\mu_{i\ell}=\frac{|G_i|}{n}
\qquad \forall i\in[g],\ \forall \ell\in[n],
\]
with groups forming a disjoint partition of the item set.

After illustrating the role of rounded cumulative proportional constraints, we return to the OBOP running example.

\begin{example}\label{ex:ejemplo_8}
Continuing with Example~\ref{ex:ejemplo_1}, consider the groups
\[
G_1=\{1,3,4,8\},
\qquad
G_2=\{2,5,6,7\}.
\]
Assume that \(\lambda_{i\ell}=\mu_{i\ell}=|G_i|/n\) for all \(i\in[g]\) and \(\ell\in[n]\). The {\rm Fair OBOP}, obtaineud by adapting model~\eqref{mod:fairness} to the {\rm OBOP}, yields the unique optimum
\[
{\rm 3 \mid 1\ 2\ 4\ 7 \mid 5\ 8 \mid 6}
\]
with objective value \(11.86\). The unconstrained {\rm OBOP} optimum from Example~\ref{ex:ejemplo_1} is not feasible under these fairness constraints: its first bucket contains two items, both from \(G_1\), while the imposed proportional bounds require a balanced representation between the two groups.
\end{example}

More generally, fairness requirements can also be expressed through explicit lower and upper bounds. For example, one may impose
\[
L_{iu}\le \sum_{r\in G_i}y_{ru}\le U_{iu}
\qquad
\text{or}
\qquad
L_{i\ell}\le \sum_{r\in G_i}\sum_{u=1}^{\ell}y_{ru}\le U_{i\ell},
\]
depending on whether the bounds refer to a specific bucket \(u\) or to the first \(\ell\) buckets.

The fairness model can also be combined with the structural variants introduced above. If the number of buckets is fixed to \(p\), the index set for \(u\) is replaced by \([p]\), the nonemptiness constraints~\eqref{eq:buckets_no_vacios_3} are imposed, and constraint~\eqref{eq:buckets_llenos_en_orden_8} is no longer needed. If prescribed bucket sizes \(q_u\) are imposed through~\eqref{eq:prescribed_bucket_sizes_assignment}, then
\[
\sum_{r=1}^n\sum_{u=1}^{\ell}y_{ru}
=
\sum_{u=1}^{\ell}q_u,
\]
so the total size of each prefix is fixed in advance and the floor and ceiling terms can be handled directly. Similarly, fairness can be incorporated into the Tail-Collapsed Upper-\(k\) formulation by imposing lower and upper bounds on the composition of the collapsed tail:
\[
L_i\le \sum_{r\in G_i}z_r\le U_i,
\qquad \forall i\in[g].
\]

\section{Computational experiments} \label{sec:CE}

We now present the computational experiments conducted to evaluate the proposed exact formulation for the {\rm OBOP} and the constrained variants introduced in the previous sections. The experiments are divided into two parts. First, we compare our exact formulation for the {\rm OBOP} with the heuristic methods proposed in \cite{aledo2018approaching}. This comparison allows us to determine which of the previously reported heuristic solutions are in fact optimal and which can be improved. Second, we solve selected instances from PrefLib under different structural and fairness constraints, illustrating how the proposed framework can incorporate additional requirements.

The instances used in this study are derived from data obtained from the PrefLib\footnote{\url{http://www.preflib.org/}} library and the MovieLens\footnote{\url{https://grouplens.org/datasets/movielens/}} dataset, and can be downloaded from the following repository.\footnote{\url{https://doi.org/10.5281/zenodo.18799994}.} All tests were performed using the commercial IP solver \textit{Xpress Mosel} on a server equipped with an AMD Ryzen~9~7950X processor (3.4~GHz, 16~cores and 32~threads) and 4\,$\times$\,48~GB of DDR5 RAM (196~GB in total). In our experiments, each Mosel instance was assigned 4 computational threads, and 4 jobs were executed in parallel. This configuration balanced solver performance with controlled resource usage across the benchmark set.

\subsection{Comparison with heuristic methods for {\rm OBOP}}

We compare the exact {\rm OBOP} formulation introduced in Section~\ref{Sec:OBOP} with the heuristic methods proposed in \cite{aledo2018approaching}. The purpose of this comparison is not to perform a running-time benchmark against those heuristics, since the two approaches have different goals and experimental protocols. Heuristic methods are designed to obtain good solutions quickly, whereas our exact formulation is designed to certify optimality. Therefore, the comparison focuses on solution quality: we determine which of the previously reported heuristic solutions are in fact optimal and which can be improved.

The benchmark set considered in this comparison consists of instances from PrefLib and MovieLens, coinciding with those used in \cite{aledo2018approaching}, thus allowing a direct comparison with the best heuristic solution previously reported for each instance. Each instance is defined by the number of items \(n\) and the number of voters \(m\), which correspond to the input rankings used to construct the pair order matrix \(C\).

The exact formulation contains a large number of transitivity constraints. We therefore consider two exact implementations. The first one is a static formulation, in which all transitivity constraints are added a priori. The second one is a branch-and-cut implementation based on lazy constraints. In this implementation, the initial model includes only the transitivity inequalities associated with triples of distinct items \(r,s,t\) satisfying \(c_{rs} \geq 0.25\), \(c_{st} \geq 0.25\), and \(c_{tr} \geq 0.25\). Any remaining transitivity inequalities that are violated by an incumbent solution are dynamically added during the branch-and-bound process. Both implementations solve the same exact formulation and certify optimality whenever they terminate.

Table~\ref{tab:results_heu_vs_exact} reports the results. For each instance, we provide the number of items and voters, the best objective value obtained by the heuristics in \cite{aledo2018approaching}, and the optimal value certified by our exact formulation. We also report the value of the linear relaxation and the running times of the static and lazy-constraint implementations. These times are included to assess the computational behavior of the exact approach, i.e. the model,  and to compare the two exact implementations, not to benchmark against the heuristic procedures.

\begingroup
\scriptsize
\setlength{\LTleft}{\fill}
\setlength{\LTright}{\fill}
\setlength{\tabcolsep}{4pt}

\begin{longtable}[c]{l r r r r r r}
\caption{Comparison between the best heuristic solutions reported in \cite{aledo2018approaching} and the optimal values certified by the exact {\rm OBOP} formulation. The columns ``Time'' and ``Time Lazy'' report the running times, in seconds, of the static and lazy-constraint implementations, respectively. Bold values indicate instances where the exact formulation improves the best heuristic solution. All numerical values are rounded to two decimals.}
\label{tab:results_heu_vs_exact}\\
\toprule
Instance & Items & Voters & Best Heu. & \multicolumn{3}{c}{Exact} \\
\cmidrule(lr){4-4}\cmidrule(lr){5-7}
& & & Obj.\ val. & Obj.\ val. & Time & Time Lazy \\
\midrule
\endfirsthead

\caption[]{Comparison between heuristic and exact approaches for the {\rm OBOP} (continued).}\\
\toprule
Instance & Items & Voters & Best Heu. & \multicolumn{3}{c}{Exact} \\
\cmidrule(lr){4-4}\cmidrule(lr){5-7}
& & & Obj.\ value & Obj.\ value & Time & Time Lazy \\
\midrule
\endhead

\midrule
\multicolumn{7}{r}{\scriptsize\itshape (continued on next page)}\\
\endfoot

\bottomrule
\endlastfoot

ED-10-02 & 51 & 9 & 524.97 & 524.97 & 7.26 & 2.20 \\
ED-10-03 & 54 & 10 & 513.38 & 513.38 & 6.57 & 2.86 \\
ED-10-11 & 50 & 11 & 558.01 & 558.01 & 6.28 & 2.45 \\
ED-10-14 & 62 & 15 & 744.64 & 744.64 & 10.60 & 8.41 \\
ED-10-15 & 52 & 14 & 576.96 & 576.96 & 6.64 & 3.02 \\
ED-10-16 & 57 & 16 & 585.07 & 585.07 & 9.63 & 3.29 \\
ED-10-17 & 61 & 17 & 683.76 & 683.76 & 12.12 & 3.39 \\
ED-11-01 & 240 & 5 & 5560.00 & \textbf{5556.20} & 39801.98 & 184.48 \\
ED-11-02 & 242 & 5 & 10913.40 & 10913.40 & 77452.66 & 1971.17 \\
ED-11-03 & 103 & 5 & 1653.00 & 1653.00 & 229.51 & 4.77 \\
ED-14-02 & 100 & 5000 & 1953.24 & 1953.24 & 1017.29 & 92.76 \\
ED-15-07 & 110 & 4 & 1443.00 & 1443.00 & 565.47 & 54.18 \\
ED-15-08 & 99 & 4 & 1304.50 & 1304.50 & 367.55 & 63.56 \\
ED-15-09 & 115 & 4 & 1684.00 & 1684.00 & 536.54 & 98.32 \\
ED-15-10 & 71 & 4 & 666.50 & 666.50 & 101.04 & 20.50 \\
ED-15-11 & 63 & 4 & 484.00 & 484.00 & 34.35 & 14.06 \\
ED-15-13 & 93 & 4 & 1381.50 & 1381.50 & 211.81 & 93.43 \\
ED-15-14 & 163 & 4 & 3505.50 & \textbf{3502.50} & 2945.57 & 345.06 \\
ED-15-15 & 69 & 4 & 728.00 & 728.00 & 107.15 & 8.54 \\
ED-15-16 & 70 & 4 & 673.00 & 673.00 & 88.73 & 8.22 \\
ED-15-17 & 127 & 4 & 2304.00 & 2304.00 & 848.89 & 223.13 \\
ED-15-19 & 87 & 4 & 999.50 & 999.50 & 271.52 & 42.18 \\
ED-15-20 & 122 & 4 & 2382.50 & 2382.50 & 794.26 & 285.38 \\
ED-15-21 & 96 & 4 & 1246.50 & 1246.50 & 271.38 & 57.23 \\
ED-15-22 & 112 & 4 & 1720.50 & 1720.50 & 530.64 & 148.31 \\
ED-15-23 & 142 & 4 & 2840.00 & \textbf{2838.00} & 6148.12 & 332.74 \\
ED-15-24 & 91 & 4 & 1028.50 & 1028.50 & 147.32 & 1028.50 \\
ED-15-26 & 82 & 4 & 844.00 & 844.00 & 42.49 & 47.66 \\
ED-15-27 & 95 & 4 & 1220.50 & 1220.50 & 61.23 & 137.82 \\
ED-15-28 & 102 & 4 & 1518.50 & 1518.50 & 125.28 & 104.05 \\
ED-15-29 & 106 & 4 & 1410.00 & 1410.00 & 85.32 & 84.86 \\
ED-15-30 & 64 & 4 & 560.50 & 560.50 & 17.88 & 20.63 \\
ED-15-31 & 67 & 4 & 589.50 & 589.50 & 14.25 & 589.50 \\
ED-15-32 & 153 & 4 & 3112.50 & 3112.50 & 604.38 & 561.76 \\
ED-15-33 & 128 & 4 & 2298.50 & 2298.50 & 230.87 & 294.50 \\
ED-15-34 & 55 & 4 & 444.50 & 444.50 & 4.30 & 16.68 \\
ED-15-35 & 68 & 4 & 674.50 & 674.50 & 15.37 & 31.76 \\
ED-15-39 & 89 & 4 & 922.00 & 922.00 & 43.33 & 38.48 \\
ED-15-40 & 131 & 4 & 2512.00 & 2512.00 & 495.29 & 398.44 \\
ED-15-51 & 77 & 4 & 777.00 & 777.00 & 105.69 & 117.42 \\
ED-15-54 & 60 & 4 & 410.50 & 410.50 & 5.84 & 3.44 \\
ED-15-57 & 73 & 4 & 821.50 & 821.50 & 22.73 & 13.59 \\
ED-15-60 & 72 & 4 & 658.50 & 658.50 & 28.99 & 658.50 \\
ED-15-69 & 81 & 4 & 809.00 & 809.00 & 30.53 & 14.97 \\
ED-15-77 & 56 & 4 & 498.67 & 498.67 & 13.36 & 3.91 \\
ML-050-6 & 50 & 936 & 418.21 & 418.21 & 4.57 & 1.90 \\
ML-100-1 & 100 & 951 & 1869.63 & 1869.63 & 166.02 & 53.64 \\
ML-200-3 & 200 & 936 & 8421.68 & 8421.68 & 4079.50 & 14964.12 \\
ML-250-4 & 250 & 942 & 12281.83 & 12281.83 & 104304.45 & 50530.62 \\
ML-250-5 & 250 & 939 & 12257.52 & 12257.52 & 121672.11 & 12227.59 \\

\end{longtable}
\endgroup

\begin{remark}
The heuristic objective values reported in Table~\ref{tab:results_heu_vs_exact} were directly extracted from the best values published in \cite{aledo2018approaching}. In that study, nine evolutionary algorithms were evaluated through 30 independent runs for each instance, amounting to 270 runs per instance. Concerning the computational effort involved, the authors report running times ranging from less than one second per run for small instances to approximately \(2\) minutes per run for the largest instances. These times are mentioned only to contextualize the effort underlying the published heuristic benchmarks; no direct running-time comparison with our exact approach is intended, since the two studies were conducted under different implementations, hardware platforms, and experimental objectives.
\end{remark}

Table~\ref{tab:results_heu_vs_exact} highlights the main contribution of the exact formulation: it transforms previously heuristic evidence into certified optimality results. In most instances, the best heuristic value reported in \cite{aledo2018approaching} coincides with the optimal value obtained by our formulation. Thus, the proposed optimization model enables us to prove that these heuristic solutions were, in many cases, globally optimal bucket orders.

At the same time, the exact formulation identifies instances in which the best heuristic solution was still suboptimal. In particular, for ED-11-01, ED-15-14, and ED-15-23, our model improves the best previously reported objective values. These cases illustrate the practical value of having an exact formulation: it can both certify optimality when a heuristic solution is optimal and detect improvements when it is not.

The instance ED-10-50 was excluded from the table because it could not be solved to proven optimality within the imposed time limit of two days, and the remaining optimality gap was still too large to provide a meaningful comparison. Here, the optimality gap is the relative difference between the best feasible objective value found by the solver and its best lower bound at termination.

Regarding running times, the results should be interpreted only within the exact approach. The lazy-constraint implementation is particularly effective in several large instances, notably ED-11-01, ED-11-02, ED-15-14, and ED-15-23, where it substantially reduces the solution time with respect to the static formulation. However, it does not uniformly dominate the static formulation, which remains competitive in some instances. Overall, both implementations are useful: the static model provides a direct benchmark when manageable, while the lazy-constraint version is especially valuable for larger or more difficult instances. In all cases, both implementations preserve the exact nature of the approach.

\subsection{Testing our models for the {\rm OBOP} variants}
\label{subsec:testing_obop_variants}

In this section, we report the computational experiments carried out to evaluate the behaviour of the proposed models when applied to the {\rm OBOP} and to several constrained variants. The aim of these experiments is not only to assess the computational performance of the formulations, but also to analyse how the optimal consensus bucket order changes when additional structural or fairness requirements are imposed. In all constrained variants considered below, the objective function remains the {\rm OBOP} distance \(D(B,C)\); only the feasible region changes through the additional constraints. Therefore, increases in the objective value can be interpreted as the cost of imposing the corresponding structural or fairness requirements.

We consider a collection of benchmark instances taken from the PrefLib\footnote{\url{http://www.preflib.org/}} library. Specifically, we select instances from three datasets: \textit{Poland Local Elections} (LPE-68-xx), \textit{Boxing} (BOX-42-xx), and \textit{Seasons Power Ranking} (SPR-56-xx). These datasets provide a diverse collection of preference profiles, with different numbers of items and voters, and therefore allow us to test the proposed formulations under different aggregation scenarios.

Each instance consists of a set of \(n\) items, a number \(m\) of voters, and a collection of individual rankings. The rankings are strict, but they are not necessarily complete. In order to adapt these data to the {\rm OBOP} framework, we first construct a pairwise comparison matrix \(A=(a_{rs})_{n\times n}\), where \(a_{rs}\) denotes the number of voters who rank item \(r\) ahead of item \(s\). Based on this matrix, we define the pair order matrix \(C=(c_{rs})_{n\times n}\) as follows:
\[
c_{rs} =
\begin{cases}
    \dfrac{a_{rs}}{a_{rs}+a_{sr}}, & \text{if } a_{rs}+a_{sr} \neq 0, \\[6pt]
    0.5, & \text{if } a_{rs}+a_{sr} = 0.
\end{cases}
\]
Thus, \(c_{rs}\) represents the empirical proportion of observed comparisons in which item \(r\) is preferred to item \(s\). When no voter compares the pair \((r,s)\), we set \(c_{rs}=0.5\), reflecting the absence of evidence in favour of either ordering.

All numerical values reported in the tables are rounded to two decimal places. The experiments were designed to compare the standard {\rm OBOP} formulation with three extensions: the fixed-number-of-buckets model, the Tail-Collapsed Upper-\(k\) model, and the fairness-constrained model.

\subsubsection{{\rm OBOP}}

We first solve the selected instances using the standard {\rm OBOP} formulation~\eqref{mod:OBOP}. These optimal values will be used as a reference to evaluate the constrained variants considered below. Since the subsequent comparisons focus on changes in the objective value, Table~\ref{tab:OBOP} reports only the optimal value, the linear relaxation bound, and basic information on the structure of the optimal solutions.

\begingroup
\scriptsize
\setlength{\tabcolsep}{3pt}
\begin{longtable}{lrrrrc}
\caption{Results obtained for the standard {\rm OBOP} model. The columns report the instance name, the number of items \(n\), the number of voters \(m\), the optimal objective value, the number of optimal solutions, and the number of buckets in each optimal solution.}
\label{tab:OBOP}\\
\toprule
Instance & \(n\) & \(m\) & Obj. value & \#Optima & \#Buckets \\
\midrule
\endfirsthead

\caption[]{Results obtained for the standard {\rm OBOP} model \textit{(continued)}.}\\
\toprule
Instance & \(n\) & \(m\) & Obj. value & \#Optima & \#Buckets \\
\midrule
\endhead

\midrule
\multicolumn{6}{r}{\textit{Continued on next page}}\\
\endfoot

\bottomrule
\endlastfoot

LPE-68-01 & 10 & 23 & 15.63 & 1 & 4 \\
LPE-68-02 & 14 & 15 & 34.14 & 2 & 3,4 \\
LPE-68-03 & 9 & 15 & 12.29 & 1 & 4 \\
LPE-68-04 & 17 & 15 & 29.78 & 1 & 3 \\
LPE-68-05 & 5 & 15 & 3.94 & 1 & 1 \\
LPE-68-06 & 9 & 15 & 11.25 & 1 & 3 \\
LPE-68-07 & 13 & 15 & 25.73 & 1 & 3 \\
LPE-68-08 & 7 & 15 & 6.38 & 1 & 4 \\
LPE-68-09 & 10 & 21 & 6.99 & 2 & 7,8 \\
LPE-68-10 & 7 & 15 & 4.97 & 1 & 2 \\[0.2em]

BOX-42-01 & 17 & 31 & 14.27 & 3 & 14,15,15 \\
BOX-42-02 & 23 & 46 & 33.70 & 1 & 9 \\
BOX-42-03 & 20 & 36 & 22.94 & 2 & 11,12 \\
BOX-42-04 & 21 & 36 & 32.69 & 1 & 9 \\
BOX-42-05 & 22 & 37 & 27.18 & 2 & 13,14 \\
BOX-42-06 & 22 & 37 & 17.38 & 1 & 14 \\
BOX-42-07 & 25 & 41 & 51.58 & 2 & 10,11 \\
BOX-42-08 & 22 & 38 & 28.01 & 1 & 12 \\
BOX-42-09 & 18 & 9 & 3.89 & 1 & 13 \\
BOX-42-10 & 16 & 31 & 1.45 & 1 & 13 \\[0.2em]

SPR-56-01 & 31 & 5 & 76.07 & 1 & 13 \\
SPR-56-03 & 41 & 5 & 155.97 & 1 & 10 \\
SPR-56-08 & 35 & 5 & 83.13 & 1 & 9 \\
SPR-56-12 & 31 & 5 & 73.30 & 2 & 11,12 \\
SPR-56-15 & 29 & 5 & 62.17 & 1 & 16 \\
SPR-56-16 & 42 & 15 & 252.98 & 1 & 7 \\
SPR-56-19 & 55 & 15 & 402.95 & 1 & 5 \\
SPR-56-29 & 40 & 15 & 180.51 & 1 & 7 \\
SPR-56-30 & 43 & 14 & 266.56 & 1 & 6 \\
SPR-56-94 & 47 & 17 & 305.39 & 1 & 4 \\

\end{longtable}
\endgroup

All instances were solved to optimality in less than five seconds, so computation times are omitted from the table. The results confirm that the standard formulation is computationally tractable for these benchmark sizes and provides a suitable baseline for the constrained variants.

Most instances have a unique optimal solution, although multiple optima appear in some cases. For the selected instances, the numbers of buckets attained by optimal solutions form a set of consecutive values.

\subsubsection{\texorpdfstring{$p$}{p}-{\rm OBOP}}

We compare the standard {\rm OBOP} with variants in which the bucket structure is partially prescribed. We consider fixed numbers of buckets \(p=2\), \(p=5\), \(p=\underline p-1\), and \(p=\overline p+1\), where \(\underline p\) and \(\overline p\) are, respectively, the minimum and maximum number of buckets attained by an optimal solution of the standard {\rm OBOP}. We also consider a prescribed-size variant with \(p=3\), where \(q_1=\lceil n/10\rceil\), \(q_2=\lceil n/5\rceil\), and \(q_3=n-q_1-q_2\). All these variants are solved using the assignment-based formulation given by model~\eqref{mod:p-bucket_1}, reinforced with inequalities~\eqref{eq:antes_o_despues_1} and~\eqref{eq:transitividad_1}.

\begingroup
\scriptsize
\setlength{\tabcolsep}{3pt}
\begin{longtable}{lrr rrrrrr}
\caption{Results obtained for the extended \(p\)-{\rm OBOP} model. The columns report the instance name, the number of items \(n\), the number of voters \(m\), the optimal value of the standard {\rm OBOP}, and the optimal values of the \(p\)-{\rm OBOP} variants. Here, \(\underline p\) and \(\overline p\) denote, respectively, the smallest and largest numbers of buckets yielding an optimal {\rm OBOP} solution. The last column corresponds to the prescribed-size model with \(p=3\), \(q_1=\lceil n/10\rceil\), \(q_2=\lceil n/5\rceil\), and the remaining items assigned to the last bucket.}
\label{tab:p-OBOP}\\
\toprule
Instance & \(n\) & \(m\) & Opt. OBOP & \multicolumn{5}{c}{\(p\)-{\rm OBOP}} \\
\cmidrule(lr){5-9}
 & & & & \(p=2\) & \(p=5\) & \(p=\underline p-1\) & \(p=\overline p+1\) & \shortstack{\(p=3\)\\ \(q_1=\lceil n/10\rceil\)\\ \(q_2=\lceil n/5\rceil\)} \\
\midrule
\endfirsthead

\caption[]{Results obtained for the extended \(p\)-{\rm OBOP} model \textit{(continued)}.}\\
\toprule
Instance & \(n\) & \(m\) & Opt. OBOP & \multicolumn{5}{c}{\(p\)-{\rm OBOP}} \\
\cmidrule(lr){5-9}
 & & & & \(p=2\) & \(p=5\) & \(p=\underline p-1\) & \(p=\overline p+1\) & \shortstack{\(p=3\)\\ \(q_1=\lceil n/10\rceil\)\\ \(q_2=\lceil n/5\rceil\)} \\
\midrule
\endhead

\midrule
\multicolumn{9}{r}{\textit{Continued on next page}}\\
\endfoot

\bottomrule
\endlastfoot

LPE-68-01 & 10 & 23 & 15.63 & 17.07 & 15.68 & 15.92 & 15.68 & 17.13 \\
LPE-68-02 & 14 & 15 & 34.14 & 35.14 & 36.14 & 35.14 & 36.14 & 42.14 \\
LPE-68-03 & 9 & 15 & 12.29 & 13.29 & 13.29 & 13.29 & 13.29 & 13.29 \\
LPE-68-04 & 17 & 15 & 29.78 & 35.67 & 37.44 & 35.67 & 32.44 & 56.11 \\
LPE-68-05 & 5 & 15 & 3.94 & 4.01 & 6.06 & -- & 4.01 & 5.01 \\
LPE-68-06 & 9 & 15 & 11.25 & 11.75 & 14.25 & 11.75 & 13.25 & 16.25 \\
LPE-68-07 & 13 & 15 & 25.73 & 31.26 & 26.71 & 31.26 & 26.26 & 25.73 \\
LPE-68-08 & 7 & 15 & 6.38 & 6.73 & 7.33 & 6.38 & 7.33 & 8.93 \\
LPE-68-09 & 10 & 21 & 6.99 & 16.71 & 8.14 & 7.14 & 7.54 & 18.49 \\
LPE-68-10 & 7 & 15 & 4.97 & 4.97 & 8.60 & 9.40 & 5.77 & 5.77 \\[0.2em]

BOX-42-01 & 17 & 31 & 14.27 & 56.12 & 27.85 & 15.01 & 15.17 & 54.12 \\
BOX-42-02 & 23 & 46 & 33.70 & 87.50 & 37.39 & 33.83 & 34.57 & 77.90 \\
BOX-42-03 & 20 & 36 & 22.94 & 76.07 & 32.98 & 22.98 & 23.40 & 73.22 \\
BOX-42-04 & 21 & 36 & 32.69 & 82.99 & 40.20 & 33.10 & 32.95 & 73.28 \\
BOX-42-05 & 22 & 37 & 27.18 & 85.95 & 35.00 & 27.42 & 27.57 & 74.74 \\
BOX-42-06 & 22 & 37 & 17.38 & 89.81 & 34.50 & 17.62 & 17.78 & 80.71 \\
BOX-42-07 & 25 & 41 & 51.58 & 123.71 & 64.17 & 51.71 & 52.16 & 132.32 \\
BOX-42-08 & 22 & 38 & 28.01 & 95.75 & 41.00 & 28.21 & 28.06 & 96.12 \\
BOX-42-09 & 18 & 9 & 3.89 & 64.33 & 18.33 & 4.00 & 4.22 & 67.11 \\
BOX-42-10 & 16 & 31 & 1.45 & 52.64 & 15.90 & 2.29 & 1.87 & 48.90 \\[0.2em]

SPR-56-01 & 31 & 5 & 76.07 & 172.07 & 83.87 & 76.27 & 76.67 & 163.27 \\
SPR-56-03 & 41 & 5 & 155.97 & 293.30 & 173.23 & 156.97 & 156.10 & 288.63 \\
SPR-56-08 & 35 & 5 & 83.13 & 200.53 & 92.73 & 83.33 & 83.53 & 242.93 \\
SPR-56-12 & 31 & 5 & 73.30 & 181.70 & 87.17 & 73.70 & 73.37 & 172.50 \\
SPR-56-15 & 29 & 5 & 62.17 & 140.63 & 77.37 & 62.30 & 62.50 & 175.03 \\
SPR-56-16 & 42 & 15 & 252.98 & 304.23 & 254.99 & 253.39 & 253.70 & 277.53 \\
SPR-56-19 & 55 & 15 & 402.95 & 486.56 & 402.95 & 405.35 & 403.42 & 463.24 \\
SPR-56-29 & 40 & 15 & 180.51 & 265.43 & 182.01 & 180.76 & 180.85 & 248.48 \\
SPR-56-30 & 43 & 14 & 266.56 & 306.13 & 267.87 & 267.87 & 266.90 & 316.11 \\
SPR-56-94 & 47 & 17 & 305.39 & 366.17 & 306.41 & 313.16 & 306.41 & 356.20 \\

\end{longtable}
\endgroup

The results in Table~\ref{tab:p-OBOP} show that the smallest deviations from the standard {\rm OBOP} optimum usually occur when the prescribed number of buckets is close to that of an optimal unconstrained solution, namely for \(p=\underline p-1\) or \(p=\overline p+1\). Larger deviations appear when \(p\) is far from this structure, especially for \(p=2\), where the ranking is forced to collapse into only two levels. The prescribed-size case is also restrictive, since it fixes not only the number of buckets but also the size of the first two levels.

Regarding computation times, all runs for \(p=2\), \(p=\underline p-1\), and \(p=\overline p+1\) were solved in less than 30 seconds. For \(p=5\), all but one instance were solved below that threshold, with a maximum time of 50.73 seconds. The prescribed-size variant was the most demanding: three instances exceeded 30 seconds, and the maximum time was 447.29 seconds. Thus, fixing only the number of buckets remains computationally mild in these experiments, whereas prescribing bucket sizes may noticeably increase the difficulty of the model.

For a more detailed view, Figure~\ref{fig:p-obop} reports the optimal value for every possible number of buckets on instances BOX-42-01 to BOX-42-05.

\begin{figure}
\centering
\caption{Optimal values of the $p$-OBOP for instances BOX-42-01 to BOX-42-05}
\label{fig:p-obop}

\begin{tikzpicture}

\begin{axis}[
    width=0.98\linewidth,
    height=0.62\linewidth,
    title={$p$-OBOP},
    xlabel={$p$},
    ylabel={Optimal value},
    xmin=0.5, xmax=23.5,
    ymin=10, ymax=92,
    xtick={1,2,...,23},
    minor xtick={2,4,6,8,10,12,14,16,18,20,22},
    ytick={10,20,30,40,50,60,70,80,90},
    minor y tick num=1,
    grid=major,
    minor grid style={gray!15},
    grid style={gray!25},
    tick label style={font=\scriptsize},
    label style={font=\small},
    title style={font=\small},
    axis x line*=bottom,
    axis y line*=left,
    clip=true
]

\pgfplotsset{
    piruleta/.style={
        ycomb,
        solid,
        line width=0.65pt,
        mark=*,
        mark size=1.5pt,
        mark options={solid}
    }
}

\addplot[
    piruleta,
    color=boxblue,
    mark=*
]
table[
    x expr=\thisrow{p}-0.2,
    y=val
] {
p val
1 125.3234185
2 56.12341852
3 42.85069124
4 31.85069124
5 27.85069124
6 23.85069124
7 20.85069124
8 19.85069124
9 18.85069124
10 17.85069124
11 16.85069124
12 15.85069124
13 15.01198157
14 14.27004608
15 14.27004608
16 15.17480799
17 16.11029186
};

\addplot[
    piruleta,
    color=boxred,
    mark=square*
]
table[
    x expr=\thisrow{p}-0.1,
    y=val
] {
p val
1 196.3997525
2 87.50425696
3 49.41516347
4 39.73447409
5 37.386648
6 35.386648
7 34.48619551
8 33.8340216
9 33.70358681
10 34.57315203
11 35.39923899
12 36.95862229
13 38.55527609
14 40.07306265
15 42.07306265
16 43.16947831
17 44.69115664
18 46.69115664
19 49.69115664
20 51.69115664
21 53.69115664
22 55.69115664
23 56.69115664
};

\addplot[
    piruleta,
    color=boxgreen,
    mark=triangle*
]
table[
    x expr=\thisrow{p},
    y=val
] {
p val
1 161.5940637
2 76.07184146
3 46.07184146
4 38.69850812
5 32.98422241
6 30.85088908
7 28.85088908
8 26.65088908
9 24.65088908
10 22.98422241
11 22.94422241
12 22.94422241
13 23.39583531
14 23.72916864
15 24.1837141
16 24.79482521
17 25.57260299
18 26.90593632
19 27.79482521
20 28.57260299
};

\addplot[
    piruleta,
    color=boxorange,
    mark=diamond*
]
table[
    x expr=\thisrow{p}+0.1,
    y=val
] {
p val
1 167.7626997
2 82.98789283
3 57.92533926
4 45.27892685
5 40.20346897
6 35.37021943
7 33.98560405
8 33.09671516
9 32.68930775
10 32.94645061
11 33.25364015
12 33.88109113
13 34.52161401
14 35.35494734
15 36.20679919
16 36.96605845
17 37.74383623
18 38.57716956
19 39.57716956
20 41.57716956
21 42.57716956
};

\addplot[
    piruleta,
    color=boxpurple,
    mark=pentagon*
]
table[
    x expr=\thisrow{p}+0.2,
    y=val
] {
p val
1 198.1652926
2 85.94790132
3 57.3795734
4 45.51118327
5 35.00380194
6 32.10133425
7 30.22001815
8 28.97677491
9 28.27407221
10 27.92431068
11 27.68106744
12 27.42053361
13 27.17729037
14 27.17729037
15 27.56859472
16 28.21565354
17 29.16159949
18 30.16159949
19 31.20507775
20 32.20507775
21 33.16159949
22 34.16159949
};


\addplot[
    only marks,
    forget plot,
    mark=*,
    mark size=2.0pt,
    mark options={
        solid,
        fill=boxblue!65!black,
        draw=boxblue!65!black
    }
]
coordinates {
    (13.8,14.27004608)
    (14.8,14.27004608)
};

\addplot[
    only marks,
    forget plot,
    mark=square*,
    mark size=2.0pt,
    mark options={
        solid,
        fill=boxred!65!black,
        draw=boxred!65!black
    }
]
coordinates {
    (8.9,33.70358681)
};

\addplot[
    only marks,
    forget plot,
    mark=triangle*,
    mark size=2.1pt,
    mark options={
        solid,
        fill=boxgreen!65!black,
        draw=boxgreen!65!black
    }
]
coordinates {
    (11,22.94422241)
    (12,22.94422241)
};

\addplot[
    only marks,
    forget plot,
    mark=diamond*,
    mark size=2.1pt,
    mark options={
        solid,
        fill=boxorange!65!black,
        draw=boxorange!65!black
    }
]
coordinates {
    (9.1,32.68930775)
};

\addplot[
    only marks,
    forget plot,
    mark=pentagon*,
    mark size=2.1pt,
    mark options={
        solid,
        fill=boxpurple!65!black,
        draw=boxpurple!65!black
    }
]
coordinates {
    (13.2,27.17729037)
    (14.2,27.17729037)
};

\end{axis}

\end{tikzpicture}

\vspace{0.35cm}

\begin{tikzpicture}

\draw[
    boxblue,
    line width=0.65pt
] (0,0) -- (0,0.34);
\fill[
    boxblue
] (0,0.34) circle[radius=1.5pt];
\node[
    anchor=west,
    font=\scriptsize
] at (0.18,0.17) {BOX-42-01};

\begin{scope}[xshift=2.35cm]
    \draw[
        boxred,
        line width=0.65pt
    ] (0,0) -- (0,0.34);
    \fill[
        boxred
    ] (-0.055,0.285) rectangle (0.055,0.395);
    \node[
        anchor=west,
        font=\scriptsize
    ] at (0.18,0.17) {BOX-42-02};
\end{scope}

\begin{scope}[xshift=4.70cm]
    \draw[
        boxgreen,
        line width=0.65pt
    ] (0,0) -- (0,0.34);
    \fill[
        boxgreen
    ] (0,0.42) -- (-0.075,0.28) -- (0.075,0.28) -- cycle;
    \node[
        anchor=west,
        font=\scriptsize
    ] at (0.18,0.17) {BOX-42-03};
\end{scope}

\begin{scope}[xshift=7.05cm]
    \draw[
        boxorange,
        line width=0.65pt
    ] (0,0) -- (0,0.34);
    \fill[
        boxorange
    ] (0,0.43) -- (0.085,0.34) -- (0,0.25) -- (-0.085,0.34) -- cycle;
    \node[
        anchor=west,
        font=\scriptsize
    ] at (0.18,0.17) {BOX-42-04};
\end{scope}

\begin{scope}[xshift=9.40cm]
    \draw[
        boxpurple,
        line width=0.65pt
    ] (0,0) -- (0,0.34);
    \fill[
        boxpurple
    ] 
        (0,0.43) --
        (0.082,0.37) --
        (0.052,0.27) --
        (-0.052,0.27) --
        (-0.082,0.37) --
        cycle;
    \node[
        anchor=west,
        font=\scriptsize
    ] at (0.18,0.17) {BOX-42-05};
\end{scope}

\end{tikzpicture}

\end{figure}

The minima in Figure~\ref{fig:p-obop} correspond to the numbers of buckets attained by optimal solutions of the standard {\rm OBOP}. Although the curves appear nearly unimodal in these instances, this behaviour is not guaranteed in general. In particular, the standard {\rm OBOP} may admit optimal solutions with two different numbers of buckets without admitting an optimal solution for every intermediate number of buckets, as shown in the following example.

\begin{example}\label{ex:ejemplo_B}
Consider the following {\rm OBOP} instance with \(4\) items:
\[
C=\frac{1}{100}
\begin{pmatrix}
50 & 55 & 90 & 100 \\
45 & 50 & 20 & 80 \\
10 & 80 & 50 & 65 \\
0 & 20 & 35 & 50
\end{pmatrix}.
\]
Solving the \(p\)-{\rm OBOP} for each possible number of buckets gives
\[
\begin{array}{c|cccc}
p & 1 & 2 & 3 & 4 \\
\hline
\text{Opt. value} & 3.40 & 2.60 & 2.80 & 2.60
\end{array}
\]
Therefore, the standard {\rm OBOP} has optimal solutions with \(2\) and \(4\) buckets, both attaining objective value \(2.60\), whereas every solution with the intermediate number \(p=3\) has objective value at least \(2.80\). Consequently, the numbers of buckets associated with optimal {\rm OBOP} solutions need not form a consecutive set. Equivalently, the optimal value of the \(p\)-{\rm OBOP} is not necessarily a unimodal function of \(p\).
\end{example}

\subsubsection{{\rm TCU}-\texorpdfstring{$k$}{k} {\rm OBOP}}

We now examine how the optimal value changes under the TCU-\(k\) formulation. To this end, we solve the same family of instances for several admissible values of \(k\), selected among \(3\), \(5\), \(\lceil n/10\rceil\), \(\lceil n/5\rceil\), and \(\lceil n/2\rceil\), using model~\eqref{mod:top-k} interpreted as the TCU-\(k\) {\rm OBOP}. In addition, we report the value \(n-|B_{\rm last}|\), where \(B_{\rm last}\) denotes the last bucket of an optimal solution of the standard {\rm OBOP}. When several optimal solutions have different last-bucket sizes, all corresponding values are reported.

\begingroup
\scriptsize
\setlength{\tabcolsep}{3pt}
\begin{longtable}{lrr rr rrrrr}
\caption{Results obtained for the TCU-\(k\) {\rm OBOP} model. The columns report the instance name, the number of items \(n\), the number of voters \(m\), the optimal value of the standard {\rm OBOP}, the value \(n-|B_{\rm last}|\), and the optimal values of the TCU-\(k\) {\rm OBOP} for several values of \(k\), whenever admissible. The symbol ``--'' indicates that the corresponding value of \(k\) is not admissible.}
\label{tab:TCU-k-OBOP}\\
\toprule
Instance & \(n\) & \(m\) & \multicolumn{2}{c}{{\rm OBOP}} & \multicolumn{5}{c}{TCU-\(k\) {\rm OBOP}} \\
\cmidrule(lr){4-5} \cmidrule(lr){6-10}
 & & & Opt. value & \(n-|B_{\rm last}|\) & \(k=3\) & \(k=5\) & \(k=\lceil n/10\rceil\) & \(k=\lceil n/5\rceil\) & \(k=\lceil n/2\rceil\) \\
\midrule
\endfirsthead

\caption[]{Results obtained for the TCU-\(k\) {\rm OBOP} model \textit{(continued)}.}\\
\toprule
Instance & \(n\) & \(m\) & \multicolumn{2}{c}{{\rm OBOP}} & \multicolumn{5}{c}{TCU-\(k\) {\rm OBOP}} \\
\cmidrule(lr){4-5} \cmidrule(lr){6-10}
 & & & Opt. value & \(n-|B_{\rm last}|\) & \(k=3\) & \(k=5\) & \(k=\lceil n/10\rceil\) & \(k=\lceil n/5\rceil\) & \(k=\lceil n/2\rceil\) \\
\midrule
\endhead

\midrule
\multicolumn{10}{r}{\textit{Continued on next page}}\\
\endfoot

\bottomrule
\endlastfoot

LPE-68-01 & 10 & 23 & 15.63 & 9 & 16.63 & 15.92 & 18.70 & 17.07 & 15.92 \\
LPE-68-02 & 14 & 15 & 34.14 & 13 & 36.14 & 41.14 & 35.14 & 36.14 & 42.14 \\
LPE-68-03 & 9 & 15 & 12.29 & 6 & 13.29 & 13.29 & 15.71 & 13.29 & 13.29 \\
LPE-68-04 & 17 & 15 & 29.78 & 16 & 43.67 & 51.11 & 40.44 & 47.11 & 56.11 \\
LPE-68-05 & 5 & 15 & 3.94 & 0 & 4.57 & -- & 4.60 & 4.60 & 4.57 \\
LPE-68-06 & 9 & 15 & 11.25 & 8 & 15.25 & 16.25 & 11.75 & 13.25 & 16.25 \\
LPE-68-07 & 13 & 15 & 25.73 & 5 & 29.26 & 25.73 & 31.26 & 29.26 & 30.26 \\
LPE-68-08 & 7 & 15 & 6.38 & 5 & 8.33 & 6.38 & 11.11 & 10.21 & 6.73 \\
LPE-68-09 & 10 & 21 & 6.99 & 8 & 17.49 & 9.55 & 27.46 & 22.49 & 9.55 \\
LPE-68-10 & 7 & 15 & 4.97 & 3 & 4.97 & 7.27 & 6.97 & 6.77 & 6.77 \\[0.2em]

BOX-42-01 & 17 & 31 & 14.27 & 16 & 80.58 & 56.74 & 94.32 & 67.58 & 30.27 \\
BOX-42-02 & 23 & 46 & 33.70 & 16 & 142.39 & 107.74 & 142.39 & 107.74 & 52.64 \\
BOX-42-03 & 20 & 36 & 22.94 & 19 & 108.59 & 78.59 & 125.59 & 94.37 & 42.71 \\
BOX-42-04 & 21 & 36 & 32.69 & 17 & 115.80 & 90.55 & 115.80 & 90.55 & 49.26 \\
BOX-42-05 & 22 & 37 & 27.18 & 15,21 & 139.68 & 108.11 & 139.68 & 108.11 & 41.02 \\
BOX-42-06 & 22 & 37 & 17.38 & 17 & 144.63 & 111.36 & 144.63 & 111.36 & 39.26 \\
BOX-42-07 & 25 & 41 & 51.58 & 20,21 & 188.07 & 161.83 & 188.07 & 161.83 & 79.10 \\
BOX-42-08 & 22 & 38 & 28.01 & 19 & 153.80 & 121.92 & 153.80 & 121.92 & 59.52 \\
BOX-42-09 & 18 & 9 & 3.89 & 16 & 98.33 & 73.11 & 113.33 & 86.11 & 32.89 \\
BOX-42-10 & 16 & 31 & 1.45 & 15 & 74.64 & 53.97 & 87.64 & 63.03 & 27.29 \\[0.2em]

SPR-56-01 & 31 & 5 & 76.07 & 24 & 306.87 & 263.87 & 283.87 & 226.87 & 117.67 \\
SPR-56-03 & 41 & 5 & 155.97 & 29 & 570.90 & 509.50 & 509.50 & 396.10 & 185.43 \\
SPR-56-08 & 35 & 5 & 83.13 & 10 & 411.73 & 360.53 & 385.93 & 314.73 & 128.53 \\
SPR-56-12 & 31 & 5 & 73.30 & 24,26 & 317.70 & 273.90 & 293.90 & 235.10 & 118.03 \\
SPR-56-15 & 29 & 5 & 62.17 & 17 & 274.63 & 236.03 & 274.63 & 219.03 & 93.43 \\
SPR-56-16 & 42 & 15 & 252.98 & 40 & 476.83 & 410.56 & 410.56 & 319.73 & 254.68 \\
SPR-56-19 & 55 & 15 & 402.95 & 28 & 853.27 & 764.83 & 730.55 & 570.81 & 407.43 \\
SPR-56-29 & 40 & 15 & 180.51 & 23 & 449.42 & 387.32 & 413.65 & 316.37 & 187.37 \\
SPR-56-30 & 43 & 14 & 266.56 & 23 & 496.68 & 460.83 & 460.83 & 385.58 & 269.20 \\
SPR-56-94 & 47 & 17 & 305.39 & 46 & 587.28 & 489.54 & 519.48 & 396.89 & 319.13 \\

\end{longtable}
\endgroup

Table~\ref{tab:TCU-k-OBOP} reports the optimal values obtained for the TCU-\(k\) {\rm OBOP}. When \(k\) is small, most items are forced into the collapsed tail bucket, and the resulting objective values are often much larger than those of the standard {\rm OBOP}, especially in the BOX and SPR instances. The column \(n-|B_{\rm last}|\) helps relate both models: whenever this value is admissible, an optimal {\rm OBOP} solution with last bucket \(B_{\rm last}\) is feasible for the TCU-\(k\) model with \(k=n-|B_{\rm last}|\). Hence, the standard {\rm OBOP} value is attained for those admissible values of \(k\).

From a computational perspective, the TCU-\(k\) model was generally solved within a few seconds, with the only notable outlier being instance SPR-56-19 for \(k=\lceil n/2\rceil\), which required \(307.77\) seconds. Figure~\ref{fig:tcu-k} provides a more detailed view for instances BOX-42-01 to BOX-42-05, showing the optimal value for every admissible value of \(k\). The global minima, highlighted in a darker shade, occur at values of \(k\) corresponding to the number of items outside the last bucket of an optimal {\rm OBOP} solution.

\begin{figure}
\centering

\caption{Objective values for the TCU-\(k\) {\rm OBOP} on instances BOX-42-01 to BOX-42-05, for every admissible value of \(k\).}
\label{fig:tcu-k}

\begin{tikzpicture}

\begin{axis}[
    width=0.98\linewidth,
    height=0.62\linewidth,
    title={TCU-$k$},
    xlabel={$k$},
    ylabel={Objective value},
    xmin=0.5, xmax=22.5,
    ymin=10, ymax=185,
    xtick={1,2,...,22},
    minor xtick={2,4,6,8,10,12,14,16,18,20,22},
    ytick={20,40,60,80,100,120,140,160,180},
    minor y tick num=1,
    grid=major,
    minor grid style={gray!15},
    grid style={gray!25},
    tick label style={font=\scriptsize},
    label style={font=\small},
    title style={font=\small},
    axis x line*=bottom,
    axis y line*=left,
    clip=true
]

\pgfplotsset{
    piruleta/.style={
        ycomb,
        solid,
        line width=0.65pt,
        mark=*,
        mark size=1.5pt,
        mark options={solid}
    }
}

\addplot[
    piruleta,
    color=boxblue,
    mark=*
]
table[
    x expr=\thisrow{k}-0.2,
    y=val
] {
k val
1 109.3234185
2 94.32341852
3 80.58148303
4 67.58148303
5 56.74277336
6 47.67825723
7 37.54277336
8 34.44753526
9 30.27004608
10 27.27004608
11 23.27004608
12 20.27004608
13 18.27004608
14 17.27004608
15 15.27004608
16 14.27004608
};

\addplot[
    piruleta,
    color=boxred,
    mark=square*
]
table[
    x expr=\thisrow{k}-0.1,
    y=val
] {
k val
1 176.3997525
2 158.1997525
3 142.3910568
4 125.7997525
5 107.7388829
6 92.40844811
7 79.53277243
8 69.11295261
9 65.21250012
10 61.23342189
11 56.75120845
12 52.63805833
13 48.15444329
14 42.5546152
15 37.1453322
16 33.70358681
17 35.70358681
18 37.70358681
19 39.70358681
20 39.70358681
21 37.70358681
22 35.70358681
};

\addplot[
    piruleta,
    color=boxgreen,
    mark=triangle*
]
table[
    x expr=\thisrow{k},
    y=val
] {
k val
1 142.5940637
2 125.5940637
3 108.5940637
4 94.37184146
5 78.59406368
6 67.22073034
7 60.83184146
8 54.99184146
9 49.49184146
10 42.71406368
11 34.69850812
12 33.19313178
13 31.15012103
14 30.26250198
15 28.81088908
16 27.14422241
17 24.94422241
18 24.39876786
19 22.94422241
};

\addplot[
    piruleta,
    color=boxorange,
    mark=diamond*
]
table[
    x expr=\thisrow{k}+0.1,
    y=val
] {
k val
1 147.8738108
2 128.8738108
3 115.7968878
4 102.1302211
5 90.5489357
6 82.79294442
7 74.20470913
8 66.61647383
9 58.86969991
10 53.28146461
11 49.25967812
12 45.22264108
13 42.99649729
14 39.94521524
15 36.83410413
16 33.30469237
17 32.68930775
18 33.30469237
19 34.68930775
20 33.68930775
};

\addplot[
    piruleta,
    color=boxpurple,
    mark=pentagon*
]
table[
    x expr=\thisrow{k}+0.2,
    y=val
] {
k val
1 177.9220494
2 158.6788061
3 139.6788061
4 122.9761034
5 108.1112386
6 91.54014574
7 76.88797182
8 65.76928792
9 54.97492834
10 46.36623269
11 41.01647117
12 36.42573758
13 32.91835624
14 28.5270519
15 27.17729037
16 28.5270519
17 29.17729037
18 29.17729037
19 29.17729037
20 29.17729037
21 27.17729037
};


\addplot[
    only marks,
    forget plot,
    mark=*,
    mark size=2.0pt,
    mark options={
        solid,
        fill=boxblue!65!black,
        draw=boxblue!65!black
    }
]
coordinates {
    (15.8,14.27004608)
};

\addplot[
    only marks,
    forget plot,
    mark=square*,
    mark size=2.0pt,
    mark options={
        solid,
        fill=boxred!65!black,
        draw=boxred!65!black
    }
]
coordinates {
    (15.9,33.70358681)
};

\addplot[
    only marks,
    forget plot,
    mark=triangle*,
    mark size=2.1pt,
    mark options={
        solid,
        fill=boxgreen!65!black,
        draw=boxgreen!65!black
    }
]
coordinates {
    (19,22.94422241)
};

\addplot[
    only marks,
    forget plot,
    mark=diamond*,
    mark size=2.1pt,
    mark options={
        solid,
        fill=boxorange!65!black,
        draw=boxorange!65!black
    }
]
coordinates {
    (17.1,32.68930775)
};

\addplot[
    only marks,
    forget plot,
    mark=pentagon*,
    mark size=2.1pt,
    mark options={
        solid,
        fill=boxpurple!65!black,
        draw=boxpurple!65!black
    }
]
coordinates {
    (15.2,27.17729037)
    (21.2,27.17729037)
};

\end{axis}

\end{tikzpicture}

\vspace{0.35cm}

\begin{tikzpicture}

\draw[
    boxblue,
    line width=0.65pt
] (0,0) -- (0,0.34);
\fill[
    boxblue
] (0,0.34) circle[radius=1.5pt];
\node[
    anchor=west,
    font=\scriptsize
] at (0.18,0.17) {BOX-42-01};

\begin{scope}[xshift=2.35cm]
    \draw[
        boxred,
        line width=0.65pt
    ] (0,0) -- (0,0.34);
    \fill[
        boxred
    ] (-0.055,0.285) rectangle (0.055,0.395);
    \node[
        anchor=west,
        font=\scriptsize
    ] at (0.18,0.17) {BOX-42-02};
\end{scope}

\begin{scope}[xshift=4.70cm]
    \draw[
        boxgreen,
        line width=0.65pt
    ] (0,0) -- (0,0.34);
    \fill[
        boxgreen
    ] (0,0.42) -- (-0.075,0.28) -- (0.075,0.28) -- cycle;
    \node[
        anchor=west,
        font=\scriptsize
    ] at (0.18,0.17) {BOX-42-03};
\end{scope}

\begin{scope}[xshift=7.05cm]
    \draw[
        boxorange,
        line width=0.65pt
    ] (0,0) -- (0,0.34);
    \fill[
        boxorange
    ] (0,0.43) -- (0.085,0.34) -- (0,0.25) -- (-0.085,0.34) -- cycle;
    \node[
        anchor=west,
        font=\scriptsize
    ] at (0.18,0.17) {BOX-42-04};
\end{scope}

\begin{scope}[xshift=9.40cm]
    \draw[
        boxpurple,
        line width=0.65pt
    ] (0,0) -- (0,0.34);
    \fill[
        boxpurple
    ]
        (0,0.43) --
        (0.082,0.37) --
        (0.052,0.27) --
        (-0.052,0.27) --
        (-0.082,0.37) --
        cycle;
    \node[
        anchor=west,
        font=\scriptsize
    ] at (0.18,0.17) {BOX-42-05};
\end{scope}

\end{tikzpicture}

\end{figure}

\subsubsection{{Fair \rm OBOP}}

We complete the computational tests with the Fair {\rm OBOP}. We consider two illustrative group partitions. The first one is an index-threshold partition, separating the first \(80\%\) of the items, according to their indices, from the remaining \(20\%\):
\[
G_1=\{\,r\in[n]: r\le 0.8n\,\},
\qquad
G_2=\{\,r\in[n]: r>0.8n\,\}.
\]
The second one is a residue-class partition, which groups the items according to their index modulo three:
\[
G_1=\{\,r\in[n]: r\bmod 3=0\,\},\quad
G_2=\{\,r\in[n]: r\bmod 3=1\,\},\quad
G_3=\{\,r\in[n]: r\bmod 3=2\,\}.
\]
In both settings, proportional bounds are imposed with
\[
\lambda_{i\ell}=\mu_{i\ell}=\frac{|G_i|}{n},
\]
and the number of buckets is restricted to lie between the minimum and maximum number of buckets attained by optimal solutions of the standard {\rm OBOP}.

\begingroup
\scriptsize
\setlength{\tabcolsep}{5pt}
\begin{longtable}{lrr rrr}
\caption{Results obtained for the Fair {\rm OBOP} model. The columns report the instance name, the number of items \(n\), the number of voters \(m\), the optimal value of the standard {\rm OBOP}, and the optimal values obtained under the index-threshold and residue-class partitions.}
\label{tab:Fairness-OBOP}\\
\toprule
Instance & \(n\) & \(m\) & Opt. OBOP & \multicolumn{2}{c}{Fair {\rm OBOP}} \\
\cmidrule(lr){5-6}
 & & & & Index-threshold partition & Residue-class partition \\
\midrule
\endfirsthead

\caption[]{Results obtained for the Fair {\rm OBOP} model \textit{(continued)}.}\\
\toprule
Instance & \(n\) & \(m\) & Opt. OBOP & \multicolumn{2}{c}{Fair {\rm OBOP}} \\
\cmidrule(lr){5-6}
 & & & & Index-threshold partition & Residue-class partition \\
\midrule
\endhead

\midrule
\multicolumn{6}{r}{\textit{Continued on next page}}\\
\endfoot

\bottomrule
\endlastfoot

LPE-68-01 & 10 & 23 & 15.63 & 15.63 & 15.70 \\
LPE-68-02 & 14 & 15 & 34.14 & 34.14 & 34.14 \\
LPE-68-03 & 9 & 15 & 12.29 & 13.29 & 12.29 \\
LPE-68-04 & 17 & 15 & 29.78 & 29.78 & 29.78 \\
LPE-68-05 & 5 & 15 & 3.94 & 3.94 & 3.94 \\
LPE-68-06 & 9 & 15 & 11.25 & 11.25 & 11.25 \\
LPE-68-07 & 13 & 15 & 25.73 & 25.73 & 30.73 \\
LPE-68-08 & 7 & 15 & 6.38 & 6.53 & 7.56 \\
LPE-68-09 & 10 & 21 & 6.99 & 6.99 & 9.14 \\
LPE-68-10 & 7 & 15 & 4.97 & 4.97 & 4.97 \\[0.2em]

BOX-42-01 & 17 & 31 & 14.27 & 14.27 & 24.68 \\
BOX-42-02 & 23 & 46 & 33.70 & 34.72 & 34.83 \\
BOX-42-03 & 20 & 36 & 22.94 & 24.72 & 25.72 \\
BOX-42-04 & 21 & 36 & 32.69 & 38.28 & 34.23 \\
BOX-42-05 & 22 & 37 & 27.18 & 39.73 & 36.73 \\
BOX-42-06 & 22 & 37 & 17.38 & 42.78 & 29.57 \\
BOX-42-07 & 25 & 41 & 51.58 & 51.58 & 60.79 \\
BOX-42-08 & 22 & 38 & 28.01 & 29.64 & 42.13 \\
BOX-42-09 & 18 & 9 & 3.89 & 10.22 & 12.89 \\
BOX-42-10 & 16 & 31 & 1.45 & 6.68 & 42.32 \\[0.2em]

SPR-56-01 & 31 & 5 & 76.07 & 90.87 & 93.27 \\
SPR-56-03 & 41 & 5 & 155.97 & 162.57 & 176.83 \\
SPR-56-08 & 35 & 5 & 83.13 & 86.60 & 130.27 \\
SPR-56-12 & 31 & 5 & 73.30 & 84.17 & 92.03 \\
SPR-56-15 & 29 & 5 & 62.17 & 63.10 & 68.83 \\
SPR-56-16 & 42 & 15 & 252.98 & 258.41 & 268.87 \\
SPR-56-19 & 55 & 15 & 402.95 & 406.90 & 431.70 \\
SPR-56-29 & 40 & 15 & 180.51 & 182.59 & 186.59 \\
SPR-56-30 & 43 & 14 & 266.56 & 266.56 & 278.56 \\
SPR-56-94 & 47 & 17 & 305.39 & 308.46 & 310.37 \\

\end{longtable}
\endgroup

Table~\ref{tab:Fairness-OBOP} shows that, in several instances, the Fair {\rm OBOP} attains the same value as the standard {\rm OBOP}, meaning that an optimal unconstrained bucket order already satisfies the imposed proportional bounds. In other cases, the objective value increases, reflecting the cost of enforcing group representation. This effect is particularly visible in some BOX instances, especially under the residue-class partition.

From a computational perspective, the Fair {\rm OBOP} is clearly more demanding than the previous variants. The index-threshold partition was solved with a maximum time of \(17316.02\) seconds, whereas the residue-class partition reached \(76209.18\) seconds in the hardest instance. The largest times occur mainly in the SPR instances, while the LPE instances remain easy to solve. Thus, fairness constraints can be incorporated into the framework, but they may significantly increase the computational effort.

Figure~\ref{fig:fairness} illustrates the effect of the fairness constraints for instance BOX-42-10 under the residue-class partition. The horizontal axis represents the prefix index \(\ell\), while the vertical axis shows, for each group, its proportion among the items contained in the first \(\ell\) buckets. In the standard {\rm OBOP} solution, Group~1 is absent from the early buckets and Group~2 is overrepresented. The fairness constraints enforce earlier representation of Group~1 and reduce the dominance of Group~2, while Group~3 is already close to proportional representation.

\begin{figure}
\centering
\caption{Comparison of group proportions with and without fairness for instance BOX-42-10}
\label{fig:fairness}
\def\lollybase{-0.03}

\pgfplotsset{
    fairnessAxis/.style={
        width=0.93\linewidth,
        height=0.3\linewidth,
        xmin=0.5,
        xmax=13.5,
        ymin=-0.05,
        ymax=1.05,
        xtick={1,2,...,13},
        ytick={0,0.2,0.4,0.6,0.8,1},
        grid=major,
        grid style={gray!10, line width=0.25pt},
        axis x line*=bottom,
        axis y line*=left,
        axis line style={gray!55, line width=0.45pt},
        tick style={gray!55, line width=0.45pt},
        xtick pos=bottom,
        ytick pos=left,
        tick label style={font=\scriptsize},
        label style={font=\scriptsize},
        ylabel style={font=\scriptsize},
        title style={font=\small},
        clip=true
    },
    piruletaWarm/.style={
        ycomb,
        solid,
        line width=0.85pt,
        color=orange!85!red,
        mark=*,
        mark size=2.05pt,
        mark options={
            solid,
            fill=orange!85!red,
            draw=orange!85!red
        }
    },
    piruletaCold/.style={
        ycomb,
        solid,
        line width=0.85pt,
        color=cyan!70!blue,
        mark=*,
        mark size=2.05pt,
        mark options={
            solid,
            fill=cyan!70!blue,
            draw=cyan!70!blue
        }
    },
    targetLine/.style={
        gray!55,
        densely dotted,
        line width=0.9pt
    },
    lowerBound/.style={
        black,
        dashed,
        line width=0.8pt,
        mark=none
    },
    upperBound/.style={
        black,
        densely dashdotted,
        line width=0.8pt,
        mark=none
    }
}

\begin{tikzpicture}

\begin{groupplot}[
    group style={
        group size=1 by 3,
        vertical sep=1.5cm
    },
    fairnessAxis
]

\nextgroupplot[
    title={Group $G_1$},
    ylabel={Proportion of group $G_1$}
]

\addplot[
    targetLine,
    forget plot
]
coordinates {
    (0.5,{5/16})
    (13.5,{5/16})
};

\addplot[
    lowerBound,
    const plot,
    forget plot
]
table[
    x=xleft,
    y expr=floor(\thisrow{den}*5/16)/\thisrow{den}
] {
xleft den
0.5 1
1.5 2
2.5 3
3.5 6
4.5 7
5.5 8
6.5 9
7.5 11
8.5 12
9.5 13
10.5 14
11.5 15
12.5 16
13.5 16
};

\addplot[
    upperBound,
    const plot,
    forget plot
]
table[
    x=xleft,
    y expr=ceil(\thisrow{den}*5/16)/\thisrow{den}
] {
xleft den
0.5 1
1.5 2
2.5 3
3.5 6
4.5 7
5.5 8
6.5 9
7.5 11
8.5 12
9.5 13
10.5 14
11.5 15
12.5 16
13.5 16
};

\addplot[
    piruletaWarm
]
table[
    x expr=\thisrow{x}-0.07,
    y expr=\thisrow{num}/\thisrow{den}
] {
x den num
1 1 0
2 2 0
3 3 0
4 4 0
5 6 0
6 8 0
7 10 0
8 11 1
9 12 1
10 13 2
11 14 3
12 15 4
13 16 5
};

\addplot[
    piruletaCold
]
table[
    x expr=\thisrow{x}+0.07,
    y expr=\thisrow{num}/\thisrow{den}
] {
x den num
1 1 0
2 2 0
3 3 0
4 6 1
5 7 2
6 8 2
7 9 2
8 11 3
9 12 3
10 13 4
11 14 4
12 15 4
13 16 5
};

\nextgroupplot[
    title={Group $G_2$},
    ylabel={Proportion of group $G_2$}
]

\addplot[
    targetLine,
    forget plot
]
coordinates {
    (0.5,{6/16})
    (13.5,{6/16})
};

\addplot[
    lowerBound,
    const plot,
    forget plot
]
table[
    x=xleft,
    y expr=floor(\thisrow{den}*6/16)/\thisrow{den}
] {
xleft den
0.5 1
1.5 2
2.5 3
3.5 6
4.5 7
5.5 8
6.5 9
7.5 11
8.5 12
9.5 13
10.5 14
11.5 15
12.5 16
13.5 16
};

\addplot[
    upperBound,
    const plot,
    forget plot
]
table[
    x=xleft,
    y expr=ceil(\thisrow{den}*6/16)/\thisrow{den}
] {
xleft den
0.5 1
1.5 2
2.5 3
3.5 6
4.5 7
5.5 8
6.5 9
7.5 11
8.5 12
9.5 13
10.5 14
11.5 15
12.5 16
13.5 16
};

\addplot[
    piruletaWarm
]
table[
    x expr=\thisrow{x}-0.07,
    y expr=\thisrow{num}/\thisrow{den}
] {
x den num
1 1 1
2 2 2
3 3 2
4 4 3
5 6 4
6 8 5
7 10 6
8 11 6
9 12 6
10 13 6
11 14 6
12 15 6
13 16 6
};

\addplot[
    piruletaCold
]
table[
    x expr=\thisrow{x}+0.07,
    y expr=\thisrow{num}/\thisrow{den}
] {
x den num
1 1 1
2 2 1
3 3 2
4 6 3
5 7 3
6 8 3
7 9 4
8 11 5
9 12 5
10 13 5
11 14 6
12 15 6
13 16 6
};

\nextgroupplot[
    title={Group $G_3$},
    xlabel={Top-$\ell$},
    ylabel={Proportion of group $G_3$}
]

\addplot[
    targetLine,
    forget plot
]
coordinates {
    (0.5,{5/16})
    (13.5,{5/16})
};

\addplot[
    lowerBound,
    const plot,
    forget plot
]
table[
    x=xleft,
    y expr=floor(\thisrow{den}*5/16)/\thisrow{den}
] {
xleft den
0.5 1
1.5 2
2.5 3
3.5 6
4.5 7
5.5 8
6.5 9
7.5 11
8.5 12
9.5 13
10.5 14
11.5 15
12.5 16
13.5 16
};

\addplot[
    upperBound,
    const plot,
    forget plot
]
table[
    x=xleft,
    y expr=ceil(\thisrow{den}*5/16)/\thisrow{den}
] {
xleft den
0.5 1
1.5 2
2.5 3
3.5 6
4.5 7
5.5 8
6.5 9
7.5 11
8.5 12
9.5 13
10.5 14
11.5 15
12.5 16
13.5 16
};

\addplot[
    piruletaWarm
]
table[
    x expr=\thisrow{x}-0.07,
    y expr=\thisrow{num}/\thisrow{den}
] {
x den num
1 1 0
2 2 0
3 3 1
4 4 1
5 6 2
6 8 3
7 10 4
8 11 4
9 12 5
10 13 5
11 14 5
12 15 5
13 16 5
};

\addplot[
    piruletaCold
]
table[
    x expr=\thisrow{x}+0.07,
    y expr=\thisrow{num}/\thisrow{den}
] {
x den num
1 1 0
2 2 1
3 3 1
4 6 2
5 7 2
6 8 3
7 9 3
8 11 3
9 12 4
10 13 4
11 14 4
12 15 5
13 16 5
};

\end{groupplot}

\end{tikzpicture}

\vspace{0.35cm}

\begin{tikzpicture}

\def\L{0.65}      
\def\T{0.82}      
\def\sepA{2.80}   
\def\sepB{4.20}   


\begin{scope}[xshift=-\sepA cm, yshift=0.45cm]
    \draw[
        orange!85!red,
        line width=0.85pt
    ] (0,0.17) -- (\L,0.17);
    \fill[
        orange!85!red
    ] (0.325,0.17) circle[radius=2.05pt];
    \node[
        anchor=mid west,
        font=\scriptsize
    ] at (\T,0.17) {Without fairness};
\end{scope}

\begin{scope}[xshift=\sepA cm, yshift=0.45cm]
    \draw[
        cyan!70!blue,
        line width=0.85pt
    ] (0,0.17) -- (\L,0.17);
    \fill[
        cyan!70!blue
    ] (0.325,0.17) circle[radius=2.05pt];
    \node[
        anchor=mid west,
        font=\scriptsize
    ] at (\T,0.17) {With fairness};
\end{scope}


\begin{scope}[xshift=-\sepB cm, yshift=0cm]
    \draw[
        gray!55,
        densely dotted,
        line width=0.9pt
    ] (0,0.17) -- (\L,0.17);
    \node[
        anchor=mid west,
        font=\scriptsize
    ] at (\T,0.17) {Ideal proportion};
\end{scope}

\begin{scope}[xshift=0cm, yshift=0cm]
    \draw[
        black,
        dashed,
        line width=0.8pt
    ] (0,0.17) -- (\L,0.17);
    \node[
        anchor=mid west,
        font=\scriptsize
    ] at (\T,0.17) {Lower bound};
\end{scope}

\begin{scope}[xshift=\sepB cm, yshift=0cm]
    \draw[
        black,
        densely dashdotted,
        line width=0.8pt
    ] (0,0.17) -- (\L,0.17);
    \node[
        anchor=mid west,
        font=\scriptsize
    ] at (\T,0.17) {Upper bound};
\end{scope}

\end{tikzpicture}

\end{figure}

\section{Conclusions and future research} \label{sec:CFR}

This paper has introduced an optimization-based framework for rank aggregation problems whose consensus output is a weak order, or bucket order. The framework is based on pairwise precedence-or-tie variables and provides a flexible modelling structure in which different aggregation objectives and additional requirements can be incorporated through linear constraints. Within this framework, we have presented the first exact approach for the Optimal Bucket Order Problem ({\rm OBOP}).

The proposed framework has also been extended to several constrained weak-order aggregation settings. In particular, we have developed formulations with a prescribed number or size of buckets, introduced a model that preserves the weak-order structure of the upper part of the ranking while collapsing the remaining items into a final bucket, and incorporated group-based fairness requirements through cumulative constraints over prefixes of buckets. These extensions show that the weak-order structure can be combined with practical modelling requirements without abandoning the exact optimization setting.

The computational results clarify the practical value of the  formulation. Previous heuristic approaches for the {\rm OBOP} had produced high-quality solutions, but their optimality could not be certified. Our model shows that, for most benchmark instances, those solutions are in fact globally optimal, while also identifying instances in which the best known heuristic value can be improved. Therefore, the exact formulation provides more than an alternative way of solving the problem: it supplies optimality certificates, validates existing heuristic evidence, and creates exact reference values for future algorithmic developments. The experiments with constrained variants further illustrate how structural and fairness requirements may alter the optimal bucket order and quantify the objective-value cost associated with imposing such requirements.

Several research directions remain open. From a computational perspective, larger instances could be addressed through stronger formulations, cutting-plane strategies, decomposition methods, or hybrid exact-heuristic approaches. In particular, high-quality heuristic or metaheuristic solutions could be used as warm starts for the proposed formulations, especially in the most challenging instances. From a modelling perspective, future work could study alternative distance measures, richer forms of incomplete or uncertain input rankings, and additional families of fairness or diversity constraints. It would also be interesting to analyze the sensitivity of the resulting weak orders with respect to the imposed structural and fairness parameters.

\section*{Acknowledgments}
{\sloppy
C. Domínguez and M. Landete were supported by project PID2021-122344NB-I00, funded by MCIN/AEI/10.13039/501100011033 and by “ERDF A way of making Europe”. C. Domínguez was also supported by project PID2024-156594NB-C21, funded by MCIU/AEI/10.13039/501100011033 and by ERDF/EU, and by Generalitat Valenciana under project Emergent CIGE/2024/132. J.D. Jaime-Alcántara and M. Landete were also supported by grant CIPROM/2024/34, funded by the Conselleria de Educación, Cultura, Universidades y Empleo, Generalitat Valenciana. J.A. Aledo was supported by SBPLY/21/180225/000062 (Junta de Comunidades de Castilla-La Mancha and ERDF A way of making Europe), PID2022-139293NB-C32 (MICIU/AEI/10.13039/501100011033 and ERDF, EU) and 2025-GRIN-38476 (Universidad de Castilla-La Mancha and ERDF A way of making Europe).\par}

\section*{Declarations}

\paragraph{Conflict of interest} 
The authors declare that they have no known competing financial interests or personal relationships that could have appeared to influence the work reported in this paper.

\paragraph{Data and code availability}
The benchmark instances used in this study are derived from data obtained from the PrefLib and MovieLens datasets. Both the instances and the source code for the exact and matheuristic models are publicly available in Zenodo at DOI: \href{https://doi.org/10.5281/zenodo.18799994}{10.5281/zenodo.18799994}.





\bibliography{bibliography}

\end{document}